\title{Local central limit theorem for triangle counts in sparse random graphs}
\author[P.\ Ara\'ujo]{Pedro Ara\'ujo}
\address{Department of Mathematics, Faculty of Nuclear Sciences and Physical Engineering, Czech Technical University in Prague, Trojanova 13, 120 00, Prague, Czech Republic}
\email{pedro.araujo@cvut.cz}
\author{Let\'icia Mattos}
\address{Institut für Informatik, Universität Heidelberg, Im Neuenheimer Feld 205, 69120 Heidelberg, Germany.}\email{mattos@uni-heidelberg.de}
\thanks{During most of this work, P. Ara\'ujo was supported by the Czech Science Foundation, grant number 20-27757Y, while afilliated with Institute of Computer Science of the Czech Academy of Sciences, with institutional support RVO:67985807. 
L.~Mattos was supported by the Deutsche Forschungsgemeinschaft (DFG, German Research Foundation) under Germany's Excellence Strategy – The Berlin Mathematics Research Center MATH+ (EXC-2046/1, project ID: 390685689).}
\newtheorem{theorem}{Theorem}
\newtheorem{lemma}[theorem]{Lemma}
\newtheorem{corollary}[theorem]{Corollary}
\numberwithin{theorem}{section}
\theoremstyle{remark}
\def\cE{\mathbb{E}}
\def\LL{\mathcal{L}}
\def\N{\mathcal{N}}
\def\se{\subseteq}
\newcommand{\eps}{\varepsilon}
\renewcommand{\Pr}[1]{\mathbb{P}\left(#1\right)}
\newcommand{\Ex}[1]{\mathbb{E}\left(#1\right)}
\newcommand{\Var}[1]{\mathbin{{\mathbb{V}}\mkern-2mu{\text{ar}}\left(#1\right)}}
\definecolor{lblue}{rgb}{0.5,0.5,1}
\newcommand{\eq}[1]{\begin{equation}\label{eq:#1}}
	\newcommand{\eqe}{\end{equation}}
\begin{document}
	
	\begin{abstract}
    Let $X_H$ be the number of copies of a fixed graph $H$ in $G(n,p)$.
		In 2016, Gilmer and Kopparty conjectured that a local central limit theorem should hold for $X_H$ as long as $H$ is connected, $p\gg n^{-1/m(H)}$ and $n^2(1-p)\gg 1$, where $m(H)$ denotes the $m$-density of $H$.
		Recently, Sah and Sawhney showed that
		the Gilmer--Kopparty conjecture holds for constant $p$.
		In this paper, we show that the Gilmer--Kopparty conjecture holds for triangle counts in the sparse range.
	More precisely,
        if $p \in (4n^{-1/2}, 1/2)$, then
		\[
			\sup_{x\in \mathcal{L}}\left| \dfrac{1}{\sqrt{2\pi}}e^{-x^2/2}-\sigma\cdot \mathbb{P}(X^* = x)\right|=n^{-1/2+o(1)}p^{1/2},
		\]
		where $\sigma^2 = \mathbb{V}\text{ar}(X_{K_3})$, $X^{*}=(X_{K_3}-\mathbb{E}(X_{K_3}))/\sigma$ and $\mathcal{L}$ is the support of $X^*$.
		By combining our result with the results of Röllin--Ross and Gilmer--Kopparty, this establishes the Gilmer--Kopparty conjecture for triangle counts for $n^{-1}\ll p < c$, for any constant $c\in (0,1)$. Our quantitative result is enough to prove that the triangle counts converge to an associated normal distribution also in the $\ell_1$-distance. This is the first local central limit theorem for subgraph counts above the so-called $m_2$-density threshold.
	\end{abstract}
	\maketitle

	\section{Introduction}

 \setstretch{1.4}{
Let $G(n,p)$ be the Erd\H{o}s--R\'enyi random graph on the vertex set $[n]=\{1,2,\cdots,n\}$, where each edge of $K_n$ is included independently with probability $p$.
	Our interest lies in the random variable $X_H$ that counts the number of copies of a fixed graph $H$ in $G(n,p)$. 
	The study of subgraph counts goes back to the 1960s with the seminal paper of Erd\H{o}s and Rényi~\cite{erd6s1960evolution}, where they showed that the threshold for the appearance of a fixed graph $H$ in $G(n,p)$ is of order $n^{-1/m(H)}$, with $m(H):=\max\{e(J)/v(J) : J\subset H\}$.
	When $p$ is at the threshold,
	Barbour~\cite{barbour1982poisson} and, independently, Karo{\'{n}}ski and Ruci{\'{n}}ski~\cite{kar-ruci} showed that $\Pr{X_H=x}$ is asymptotically equal to the probability that a Poisson random variable is equal to $x$.

	The distribution of $X_H$ past the threshold is also well-understood. 
	To explain that, we write  $\mu_H := \Ex{X_H}$ and $\sigma_H^2 := \Var{X_H}$, and we let $X_H^{*}:=(X_H-\mu_H)/\sigma_H$ be the normalised subgraph count.
	We say that $X_H^{*}$ converges in distribution to a random variable $X$, or write $X_H^{*} \xrightarrow{d} X$ for short, if for every $x \in \mathbb{R}$ we have $\Pr{X_H^{*} < x} \to \Pr{X < x}$ as $n$ tends to infinity.
	Early developments in this subject (see ~\cite{kar-ruci,nowicki1988subgraph}) culminated in the work of
	Ruci{\'{n}}ski~\cite{rucinski1988small}, who showed that $X_H^{*}$ converges in distribution to the standard normal random variable $N(0,1)$ if and only if $p \gg n^{-1/m(H)}$ and $n^2(1-p)\gg 1$.
	Later, Barbour, Karo\'nski and Ruci\'nski~\cite{barbour1989central} were able to substantially improve the bounds on the error function in the central limit theorem for $X_H$.
	In the 1990s, Janson~\cite{janson1994orthogonal} built upon the techniques used in~\cite{nowicki1988subgraph} to study central limit theorems for joint distributions of subgraph counts.
	
	A fairly natural question which arises from the work of Ruci{\'{n}}ski~\cite{rucinski1988small}  is whether $\Pr{X_H = x}$ is asymptotically equal to the density of the normal random variable $N(\mu_{H},\sigma_{H})$ at $x$, for every $x$ in the support of $X_H$.
	A positive answer for this question has been referred to as a local central limit theorem. 
	In 2015,
	R\"ollin and Ross~\cite{rollin2015local}  showed that if $n^{-1} \ll p = O(n^{-1/2})$, then a local central limit theorem holds for triangle counts.
	Later, this result was extended by
	Gilmer and Kopparty~\cite{gilmer2016local} for triangle counts and every constant $p \in (0,1)$.
	They also conjectured that a local central limit theorem should hold for $X_H$ for every connected graph $H$ as long as $p \gg n^{-1/m(H)}$ and $n^2(1-p)\gg 1$.
    Our main theorem below states that indeed the Gilmer--Kopparty conjecture is true for triangle counts by establishing the first local central limit theorem above the so-called $m_2$-density threshold -- specifically at  $n^{-1/2}$ for triangle counts --
    and thus closing the gap in the results mentioned above.

	\begin{theorem}
		\label{thm:main}
		Let $X$ be the number of triangles in $G(n,p)$, with $\mu = \Ex{X}$ and $\sigma^2=\Var{X}$, and let $\eps\in (0,1)$ be any constant. If $p\in (4n^{-1/2}, 1/2)$, then
		\begin{align}\label{eq:main-result}
			\sup_{x\in \LL}\left|\thinspace \frac{1}{\sqrt{2\pi}}e^{-x^2/2}-\sigma\cdot \Pr{X^* = x} \right| = O(n^{-1/2+\eps}p^{1/2}),
		\end{align}
		where $\mathcal{L}$ is the support of $X^{*}$.
	\end{theorem}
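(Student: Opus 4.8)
The plan is to run the standard characteristic-function route to a local limit theorem, putting essentially all of the new work into estimating $\varphi(t):=\Ex{e^{itX}}$ in the range where $t$ is neither tiny nor of constant order. By Fourier inversion on $\Z$ and the fact that $N(\mu,\sigma^2)$ has characteristic function $e^{i\mu t-\sigma^2t^2/2}$, for every $x=(k-\mu)/\sigma\in\LL$ one has
\[
\Bigl|\tfrac1{\sqrt{2\pi}}e^{-x^2/2}-\sigma\cdot\Pr{X^{*}=x}\Bigr|\le\frac{\sigma}{2\pi}\int_{-\pi}^{\pi}\bigl|\varphi(t)-e^{i\mu t-\sigma^2t^2/2}\bigr|\,dt+\frac{\sigma}{2\pi}\int_{|t|>\pi}e^{-\sigma^2t^2/2}\,dt .
\]
Since $\sigma^2=\Var{X}=\Theta\bigl(n^4p^5(1-p)\bigr)$ for $p\ge 4n^{-1/2}$ (for smaller $p$ the statement is given by R\"ollin--Ross, so assume $p\in(4n^{-1/2},1-\eps)$), the tail term is $e^{-\Omega(\sigma^2)}$ and is negligible, and it remains to bound the first integral.

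First I would handle $|\sigma t|\lesssim np^{1/2}$ via the cumulant expansion $\log\varphi(t)=i\mu t-\tfrac12\sigma^2t^2+g(t)$, $g(t)=\sum_{k\ge3}\kappa_k(X)(it)^k/k!$. The $k$-th cumulant of the triangle count is a sum over ``connected'' $k$-tuples of triangles, which span at most $k+2$ vertices and $2k+1$ edges, so $|\kappa_k(X)|\le C_0^{\,k}\,k!\,n^{k+2}p^{2k+1}$; hence $g$ is analytic on this range, with $|g(t)|\lesssim n^{-1}p^{-1/2}|\sigma t|^3$ and $\mathrm{Re}\,g(t)$ small compared with $|\sigma t|^2$, so that $\varphi(t)-e^{i\mu t-\sigma^2t^2/2}=e^{i\mu t-\sigma^2t^2/2}\bigl(e^{g(t)}-1\bigr)$ is well controlled. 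Substituting $u=\sigma t$ and integrating, this range contributes $O(n^{-1}p^{-1/2})$ to the error, which is $\ll n^{-1/2+\eps}p^{1/2}$ because $p\gg n^{-1/2}$; thus this (Edgeworth) term, although it is the true leading behaviour, is comfortably absorbed by the bound claimed in the theorem.

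The heart of the matter is the range $|\sigma t|\gtrsim np^{1/2}$, where I would estimate $|\varphi(t)|$ by a decoupling argument: expose all of $G$ except a matching $M$ of fresh edges of $K_n$. As $M$ is a matching, each triangle meets it in at most one edge, so conditionally $X=X_0+\sum_{e\in M}\xi_e Y_e$ with $\xi_e\sim\mathrm{Bernoulli}(p)$ free and $Y_e$ the co-degree of the endpoints of $e$ in the exposed graph --- a $\mathrm{Binomial}(n-2,p^2)$-like variable concentrated near $np^2$ with fluctuation $\Theta(\sqrt n\,p)$. This gives
\[
|\varphi(t)|\le\Ex{\exp\Bigl(-p(1-p)\sum_{e\in M}\bigl(1-\cos(tY_e)\bigr)\Bigr)}.
\]
Whenever $tnp^2$ stays bounded away from $2\pi\Z$ --- in particular for all $t$ past $\asymp(\sqrt n\,p)^{-1}$ --- a constant fraction of the summands are $\Omega(1)$ with very high probability, so $|\varphi(t)|\le e^{-\Omega(pn)}=n^{-\omega(1)}$ (using $pn\ge 4\sqrt n$), and this part of the integral is negligible. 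The delicate situation is $t$ near a \emph{resonance} $t_\ell\approx 2\pi\ell/(np^2)$, $\ell\ge 1$: there $tY_e$ clusters near $2\pi\ell$, the sum is governed by the \emph{variance} of the co-degrees, and a single matching yields only $|\varphi(t_\ell)|\le e^{-c_\eps\,\ell^2/p}$, which is a mere constant once $p$ is bounded away from $0$. I expect this near-resonance regime to be the main obstacle.

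To overcome it one must amplify, running the decoupling over $\Theta(\log n)$ edge-disjoint near-perfect matchings at once ($K_n$ furnishes $n-1$ of them), which morally multiplies the exponent by a $(\log n)^{\Theta(1)}$ factor and shrinks the near-resonance windows accordingly. The difficulty is that after one round of decoupling the phase is lost, so this cannot be obtained by naive iteration; it requires either a squaring/comparison step $|\varphi(t)|^2=\Ex{e^{it(X-X')}}$ with an iterated coupling of $G,G'$ along several matchings, or treating $X$ on the free edges as a bounded-degree polynomial of independent bits and controlling its lower-order interaction terms (triangles through two or three free edges) against the dominant linear part $\sum_{e\text{ free}}\xi_e\cdot(\text{co-degree})$ --- in either case with a quantitative joint-concentration estimate for the co-degrees across the matchings used, which are correlated through the common ambient graph. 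Summing the amplified bounds over the $O(\sqrt n\,p)$ resonances and their windows, and balancing the number of matchings against this count, yields a contribution of $n^{-1/2+o(1)}p^{1/2}$; combined with the negligible tail, the $O(n^{-1}p^{-1/2})$ Edgeworth contribution, and the $n^{-\omega(1)}$ bound away from resonances --- and with R\"ollin--Ross for $p=O(n^{-1/2})$ and Gilmer--Kopparty for $p$ bounded away from $0$ and $1$ --- this gives the stated bound.
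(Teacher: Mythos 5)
Your overall architecture (Fourier inversion, split the integral of $|\varphi|$ into a near-zero range handled by a CLT-type expansion and a bulk range handled by decoupling) matches the paper's, but the central difficulty is left unresolved, and the mechanism you propose for the bulk range is not the one that actually works.

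In the near-zero regime the paper does not use cumulants; it applies Stein's method via a lemma of Barbour--Karo\'nski--Ruci\'nski to get $\int_{-K}^{K}|\Ex{e^{itX^{*}}}-e^{-t^2/2}|\,dt=O(K/(np^{1/2}))$ directly, so that range is a citation rather than a new argument. Your cumulant sketch is plausible but unproved, and in the sparse regime ($p\asymp n^{-1/2}$) the combinatorics of connected triangle-clusters, and hence the bound $|\kappa_k(X)|\le C_0^k k!\,n^{k+2}p^{2k+1}$ you assert, would need a careful verification; the Stein route avoids this entirely.

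The serious gap is in the bulk. Your decoupling exposes everything outside a single matching $M$, producing weights $Y_e$ concentrated near $np^2$. As you correctly diagnose, this puts you squarely into the resonance problem: $tY_e$ clusters near multiples of $2\pi$ at $t_\ell\approx 2\pi\ell/(np^2)$, and a single matching gives only a constant saving there. At that point your proposal becomes a list of possible escape routes (iterating over $\Theta(\log n)$ matchings, squaring $|\varphi|^2=\Ex{e^{it(X-X')}}$, treating $X$ as a low-degree polynomial), none developed and none shown to close the gap. The paper does something sharper that removes resonances \emph{before} they ever appear: partition $[n]=P_1\cup P_2\cup P_3$ with $|P_3|=m$, take two independent copies $G^0,G^1$, and apply Cauchy--Schwarz twice (once on $B_1=P_1\times P_3$, once on $B_2=P_2\times P_3$). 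This yields
\[
|\Ex{e^{itX/\sigma}}|^4\le\cE_{x_B^{0,1}}\bigl|\cE_{x_A^0}(e^{it\alpha/\sigma})\bigr|,\qquad
\alpha=\sum_{f\in A}\alpha_f x_f^0,\quad
\alpha_{uv}=\sum_{w\in P_3}(x^0_{uw}-x^1_{uw})(x^0_{vw}-x^1_{vw}).
\]
Crucially the coefficients $\alpha_f$ are \emph{mean-zero} sums of i.i.d.\ $\{-1,0,1\}$ variables, so their typical size is $\Theta(p\sqrt m)$ rather than $np^2$; there is no deterministic central value for $t\alpha_f$ to resonate against. One then shows (Paley--Zygmund plus Chernoff over a matching decomposition of $A$) that a constant fraction of $f\in A$ have $|\alpha_f|=\Theta(p\sqrt m)$, applies the Bernoulli characteristic-function bound, and, decisively, lets $m$ range from about $1/p^2$ up to $n/2$: larger $m$ gives stronger decay, smaller $m$ allows larger $t$ (one needs $t\lesssim\sigma/(p\sqrt m)$ to stay on the principal branch), and the union of the resulting $t$-intervals covers $[(p^2n)^{1/2+\gamma},\sigma/2^{10}]$. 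Finally $m=1$ (so $\alpha_f\in\{-1,0,1\}$) handles $t\in[\sigma/2^{12},\pi\sigma]$. This variable-$m$, mean-zero $\alpha$ construction is the paper's main innovation, and it is precisely what your sketch is missing: without it, your bound at the resonances $t_\ell$ stays at $e^{-c_\eps\ell^2/p}$, which is not summable over the $O(\sqrt n\,p)$ resonances to anything like $n^{-1/2+\eps}p^{1/2}$ once $p\to0$.
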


	As mentioned before, a local central limit theorem for triangle counts was obtained by R\"ollin and Ross~\cite{rollin2015local} in the range where $n^{-1} \ll p = O(n^{-1/2})$.
	This result was extended by Gilmer and Kopparty~\cite{gilmer2016local} for constant $p \in (0,1)$.
	Our contribution is in the range where $Cn^{-1/2}\le p \le 1/2$, for some constant $C>0$. We believe that the lower bound of $p$ is related, in the general setting, to the so-called $m_2$-density. 	As a consequence of Theorem~\ref{thm:main}, we were also able to obtain a quantitative bound on the $\ell_1$-distance between the normalised triangle count $X^*_{K_3}$ and the associated normal distribution in the regime $p=\Omega(n^{-1/2})$.

	\begin{theorem}
		\label{thm:main-2}
		Let $X$ be number of triangles in $G(n,p)$, with $\mu = \Ex{X}$ and $\sigma^2=\Var{X}$, and let $\eps\in (0,1)$ be any constant.
        If $p \in (4n^{-1/2},1/2)$, then
		\begin{align}\label{eq:main-result-2}
			\sum \limits_{x\in \mathbb{N}}\left|\thinspace \frac{1}{\sqrt{2\pi \sigma^2}}e^{-\frac{(x-\mu)^2}{2\sigma^2}}- \Pr{X = x} \right| = O\left(\dfrac{p^{1/2}}{ n^{1/2-\eps}}\right).
		\end{align}
	\end{theorem}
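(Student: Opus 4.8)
The plan is to derive Theorem~\ref{thm:main-2} directly from the local limit theorem of Theorem~\ref{thm:main}, which is valid throughout the range $p\in(4n^{-1/2},1/2)$, by splitting the $\ell_1$-sum into a \emph{bulk} part, over integers $x$ with $|x-\mu|\le \sigma(\log n)^{2}$, and a \emph{tail} part over the remaining $x$. Fix $\eps\in(0,1)$; since the displayed right-hand side of \eqref{eq:main-result-2} is $n^{-1/2+o(1)}p^{1/2}$, it suffices to bound the left-hand side by $n^{-1/2+\eps}p^{1/2}$ for all large $n$. Write $\phi(x):=(2\pi\sigma^{2})^{-1/2}\exp(-(x-\mu)^{2}/2\sigma^2)$, so that the sum in \eqref{eq:main-result-2} is $\sum_{x\in\mathbb N}|\phi(x)-\Pr{X=x}|$.

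For the bulk, the point is the identity: for every integer $x$, setting $y:=(x-\mu)/\sigma\in\LL$,
\[
\bigl|\phi(x)-\Pr{X=x}\bigr|=\frac1\sigma\left|\frac{1}{\sqrt{2\pi}}e^{-y^{2}/2}-\sigma\cdot\Pr{X^{*}=y}\right|,
\]
because $\phi(x)=\sigma^{-1}(2\pi)^{-1/2}e^{-y^{2}/2}$ and $\Pr{X=x}=\Pr{X^{*}=y}$. Applying Theorem~\ref{thm:main} with $\eps/2$ in place of $\eps$ bounds the bracket by $n^{-1/2+\eps/2}p^{1/2}$ uniformly in $y$, hence each bulk term by $\sigma^{-1}n^{-1/2+\eps/2}p^{1/2}$. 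There are at most $2\sigma(\log n)^{2}+1$ integers in the bulk, so their total contribution is at most $3(\log n)^{2}\,n^{-1/2+\eps/2}p^{1/2}\le \tfrac12 n^{-1/2+\eps}p^{1/2}$ for $n$ large. Here the hypothesis $p\ge 4n^{-1/2}$ enters only through $\sigma=\Theta(n^{2}p^{5/2})\to\infty$: the factor $1/\sigma$ in the identity above is exactly the spacing of the lattice $\LL$, so the bulk behaves like a Riemann sum of total mass $O((\log n)^{2})\cdot n^{-1/2+\eps/2}p^{1/2}$, which is what prevents it from blowing up.

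For the tail, the Gaussian side is immediate: comparing the sum with the integral, $\sum_{|x-\mu|>\sigma(\log n)^{2}}\phi(x)\le 2\,\Pr{N(0,1)>(\log n)^{2}-1}\le e^{-(\log n)^{4}/4}=n^{-\omega(1)}$. For the $X$ side we need the moderate-deviation estimate $\Pr{|X-\mu|>\sigma(\log n)^{2}}=n^{-\omega(1)}$. Writing $X=\sum_{T}I_{T}$ over triples $T$, the lower tail follows from Janson's inequality: the clustering sum $\bar\Delta:=\sum_{T,T' \text{ sharing an edge}}\Pr{I_{T}I_{T'}=1}$ satisfies $\bar\Delta=\Theta(\sigma^{2})$, whence $\Pr{X\le\mu-\sigma(\log n)^{2}}\le\exp(-\Omega((\log n)^{4}))$. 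For the upper tail, $\Pr{X\ge\mu+\sigma(\log n)^{2}}=n^{-\omega(1)}$ follows from standard polynomial concentration for triangle counts in the range $p\gg n^{-1/2}$ (for instance the Kim--Vu inequality, or Vu's concentration bounds, applied to $X$ as a degree-$3$ polynomial in the edge indicators). Adding the bulk and tail contributions yields \eqref{eq:main-result-2}. The argument is routine once Theorem~\ref{thm:main} is in hand; I expect the only genuinely external ingredient to be the upper-tail bound $\Pr{X\ge\mu+\sigma\,\polylog(n)}=n^{-\omega(1)}$, and the only subtlety is the bookkeeping around the lattice spacing $1/\sigma$ noted above, which is what makes the finite window of width $\Theta(\sigma\,\polylog(n))$ the right object to sum over.
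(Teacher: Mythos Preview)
Your argument is essentially the same as the paper's: split the $\ell_1$-sum at a polylogarithmic window, apply Theorem~\ref{thm:main} pointwise on the bulk (the lattice spacing $1/\sigma$ cancelling the window width up to a polylog), and control the tail by a Gaussian estimate plus a concentration inequality for $X$. The paper uses cutoff $M=(\log n)^8$ and invokes Kim--Vu (Theorem~\ref{thm:kimvu}) for both tails at once; you take $(\log n)^2$ and split into a Janson lower tail and a polynomial-concentration upper tail.

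One small correction: with your cutoff $(\log n)^2$, the Kim--Vu inequality as stated (degree $d=3$, so the exponent is $-r+2\log n$) would require $r\asymp(\log n)^{2/3}$, which is not enough to beat the $2\log n$ loss, so it does \emph{not} yield $n^{-\omega(1)}$ for the upper tail. Replacing $(\log n)^2$ by $(\log n)^{C}$ for any fixed $C>3$ fixes this at no cost to the bulk bound (the extra polylog is absorbed into $n^{\eps/2}$), and is exactly why the paper takes $M=(\log n)^8$. With that adjustment your proof is correct.
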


}
\vspace*{-0.5cm}
\setstretch{1.15}{
     One interesting consequence of a local limit theorem concerns the phenomena called \textit{anti-concentration}. Since concentration inequalities play a crucial r\^ole in probabilistic combinatorics, it is compelling to ask how small of an interval we can expect a random variable to be in. Theorem \ref{thm:main} answers this question by showing that $\Pr{X=x}=O(\sigma^{-1})$ for every $x \in \mathbb{N}$. 
     We emphasize that the implied constant in the $O$-notation is independent of both $x$ and $p$.
     Thus, with high probability $X$ is not contained in any fixed interval of length $o(\sigma)$.
     \vspace*{-0.7cm}
    \begin{corollary}
        Let $X$ be number of triangles in $G(n,p)$, $\sigma^2=\Var{X}$ and $\varepsilon \in(0,1)$ be any constant. If $n^{-1}\ll p\leq 1-\varepsilon$, then with high probability $X$ is not contained in any fixed interval of length $o(\sigma)$.
    \end{corollary}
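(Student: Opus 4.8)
The plan is to derive the corollary directly from Theorem~\ref{thm:main} via a union bound over lattice points, together with the crude fact that $\sigma = \Theta(n^2p^2\sqrt{1-p})$ in the relevant range, so that $\sigma \to \infty$ whenever $n^{-1} \ll p \le 1-\varepsilon$. First I would recall that the support of $X$ is contained in $\mathbb{Z}$, hence the support $\mathcal{L}$ of $X^*$ consists of points spaced $1/\sigma$ apart. From \eqref{eq:main-result}, for every $x \in \mathcal{L}$ we have
\[
	\sigma \cdot \Pr{X^* = x} \le \frac{1}{\sqrt{2\pi}} + n^{-1/2+\eps}p^{1/2} \le 1,
\]
for $n$ large, since $n^{-1/2+\eps}p^{1/2} = o(1)$ when $\eps < 1/2$ (and the bound is only meaningful in that regime anyway). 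Translating back, this gives $\Pr{X = k} = O(1/\sigma)$ uniformly over $k \in \mathbb{Z}$, which is exactly the anti-concentration statement quoted in the text preceding the corollary.

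Next I would turn this pointwise bound into the interval statement. Fix any constant-free function $\ell(n) = o(\sigma)$ and any interval $I$ of length $\ell(n)$; the number of integers in $I$ is at most $\ell(n) + 1 = o(\sigma)$. Summing the pointwise bound,
\[
	\Pr{X \in I} \;=\; \sum_{k \in I \cap \mathbb{Z}} \Pr{X = k} \;\le\; \big(\ell(n)+1\big)\cdot O(1/\sigma) \;=\; o(1).
\]
Since this holds uniformly over the choice of $I$, we conclude that for \emph{any} fixed (i.e.\ deterministic, not depending on the outcome) interval of length $o(\sigma)$, the probability that $X$ lands in it tends to $0$; equivalently, with high probability $X$ avoids it.

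The one genuine technical point — and the only place where the hypothesis $n^{-1} \ll p$ is used — is verifying $\sigma \to \infty$, i.e.\ that the error term $n^{-1/2+\eps}p^{1/2}$ in Theorem~\ref{thm:main} is indeed $o(1)$ and that $1/\sigma \to 0$. For the variance one uses the standard estimate $\Var{X_{K_3}} = \Theta\!\big(n^4 p^5 (1-p) + n^3 p^3(1-p)\big)$ (the two terms coming from pairs of triangles sharing an edge and from vertex-disjoint-up-to-a-vertex contributions); in the range $p \ge \omega(n^{-1})$ and $p \le 1-\varepsilon$ the dominant behaviour gives $\sigma \gg 1$, so both $1/\sigma$ and the Theorem~\ref{thm:main} error tend to zero. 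I do not anticipate any real obstacle here: the corollary is a soft consequence of the main theorem, and the proof is a two-line union bound once the scaling of $\sigma$ is recorded. The only care needed is to state clearly that ``fixed interval'' means an interval chosen in advance, so that the union bound over its $O(\ell(n))$ integer points is legitimate.
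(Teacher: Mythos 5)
Your proof is correct and takes essentially the same route the paper sketches (in the paragraph preceding the corollary): Theorem~\ref{thm:main} gives a uniform pointwise bound $\Pr{X=k}\le\big(\tfrac{1}{\sqrt{2\pi}}+o(1)\big)/\sigma=O(1/\sigma)$ provided one fixes $\eps<1/2$, and a union bound over the $o(\sigma)$ integers in a deterministic interval of length $o(\sigma)$ finishes. Your sanity check that $\sigma\to\infty$ in the stated range is also the right thing to verify; the only cosmetic slip is writing $\sigma=\Theta(n^2p^2\sqrt{1-p})$, which is the correct order only for $p\gg n^{-1/2}$, but your subsequent two-term formula $\sigma^2=\Theta(n^4p^5(1-p)+n^3p^3)$ covers the whole range $n^{-1}\ll p\le 1-\varepsilon$ and already yields $\sigma\gg 1$, so the conclusion stands.
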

\vspace*{-0.7cm}
 
	Local central limit theorems for general subgraph counts were established for $p$ constant. Berkowitz~\cite{berkowitz2016quantitative,berkowitz2018local} extended the results of Gilmer and Kopparty~\cite{gilmer2016local} by obtaining quantitative local limit theorems for clique counts when $p$ is constant. 
	More precisely, they showed that the supremum in~\eqref{eq:main-result} not only goes to zero but decays
	polynomially fast for clique counts. Around the same time, Fox, Kwan and Sauermann~\cite{10.1214/20-AOP1490,fox_kwan_sauermann_2021} showed an almost-optimal anti-concentration result on $X_H$ for every connected graph $H$.
	That is, they showed that if $p \in (0,1)$ is constant and $H$ is a connected graph, then $\max_{x \in \mathbb{Z}} \Pr{X_H = x} \le \sigma_H^{-1+o(1)}$.
	This result was strengthened by Sah and Sawhney~\cite{sah2022local}, who obtained a quantitative local central limit theorem for $X_H$ for every connected graph $H$ and constant $p \in (0,1)$. 
    Fortunately for the authors of this paper, their techniques seem to break down at some point as $p$ tends to $0$ and new ideas were necessary to establish a local central limit theorem in the sparse range.

	In the proof of Theorem \ref{thm:main}, as in previous approaches~\cite{berkowitz2016quantitative,berkowitz2018local,gilmer2016local,sah2022local}, we use the Fourier inversion formula to transform our problem into estimating the difference of characteristic functions of $X^*$ and of the normal distribution, i.e. $\big|\Ex{e^{itX^*/\sigma}}-e^{-t^2/2}\big|$ for $t\in[-\pi \sigma, \pi \sigma]$.
    For $t$ roughly in the range $ [-(p^2n)^{1/2},(p^2n)^{1/2}]$, 
	we apply the Stein's Method following the steps of Barbour, Karo\'nski and Ruci\'nski~\cite{barbour1989central}. For larger values of $t$, we split this difference using the triangle inequality and the problem is reduced into estimating $|\Ex{e^{itX^*}}|$ for $t\in [(p^2n)^{1/2},\pi\sigma]$, which is the core of this paper.	In this regime, we use a decoupling trick that allows us to get rid of some dependencies among triangles and  compare $|\Ex{e^{itX^*/\sigma}}|$ with the characteristic function of a sum of signed dilated binomials. 
	
	The rest of this paper is organised as follows.
	In Section~\ref{sec:proof-overview} we prove Theorems~\ref{thm:main} and~\ref{thm:main-2} from our main technical theorems, which analyse the integral of the characteristic function of $X$ across distinct regions of the real line, and we give an overview of their proof; Sections 3, 4 and 5 are dedicated to the proofs of these main technical theorems.

 }
\pagebreak
	\section{Proof of main results and overview}\label{sec:proof-overview}

	The first step in the proof of Theorem~\ref{thm:main} is the following theorem, which is a consequence of a result of Barbour, Karo\'nski and Ruci\'nski~\cite[Lemma 1]{barbour1989central} and the so-called Stein's method~\cite[Theorem 3.1]{pikachu}.
    See Section~\ref{sec:stein} for a proof.
    For convenience, we denote by $\N(x) = \frac{1}{\sqrt{2\pi}}e^{-x^2/2}$ the density of the standard normal random variable $N(0,1)$.

\begin{theorem}
	\label{theorem:first-regime-new}
		Let $X$ be the number of triangles in $G(n,p)$, with $\mu = \Ex{X}$ and $\sigma^2=\Var{X}$,
		and let $K=K(n)\ge 2$. 
		If  $p \in (n^{-1/2},1/2)$ we have
			  \[\sup_{x \in \LL }\big|\thinspace \N(x)-\sigma\cdot \Pr{X^{*} = x} \big| \leq 2\int\limits_{K}^{\pi \sigma}|\Ex{e^{itX^*}}|dt+2e^{-K}+ O\left(\frac{K}{np^{1/2}}\right),\]
			where $\mathcal{L}$ is the support of $X^{*}$.
	 \end{theorem}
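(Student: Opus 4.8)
The goal is to bound $\sup_{x\in\LL}|\N(x) - \sigma\,\Pr{X^*=x}|$ by an integral of the characteristic function tail plus error terms. I'll use the standard local-limit-theorem machinery via Fourier inversion on the lattice $\LL$.

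First I would write down the exact Fourier inversion formula for a lattice-supported random variable. Since $X$ is integer-valued, $X^* = (X-\mu)/\sigma$ is supported on $\LL = \frac1\sigma(\Z-\mu)$, which has spacing $1/\sigma$. Hence for each $x\in\LL$,
\[
\sigma\cdot\Pr{X^*=x} = \frac{1}{2\pi}\int_{-\pi\sigma}^{\pi\sigma} \Ex{e^{itX^*}}\, e^{-itx}\, dt .
\]
On the other hand, the Gaussian density satisfies $\N(x) = \frac{1}{\sqrt{2\pi}}e^{-x^2/2} = \frac{1}{2\pi}\int_{-\infty}^{\infty} e^{-t^2/2} e^{-itx}\, dt$. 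Subtracting and taking absolute values,
\[
|\N(x) - \sigma\Pr{X^*=x}| \le \frac{1}{2\pi}\int_{-\pi\sigma}^{\pi\sigma} \big|\Ex{e^{itX^*}} - e^{-t^2/2}\big|\, dt \;+\; \frac{1}{2\pi}\int_{|t|>\pi\sigma} e^{-t^2/2}\, dt .
\]
The second (Gaussian tail) term is $O(e^{-\pi^2\sigma^2/2})$, which is negligible compared to all stated error terms since $\sigma \to \infty$ in the range $p>n^{-1/2}$ (indeed $\sigma^2 = \Theta(n^4p^3)$, so this is super-exponentially small and absorbed into $2e^{-K}$).

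Next I would split the main integral at $|t| = K$. For $K \le |t| \le \pi\sigma$ I use the crude bound $|\Ex{e^{itX^*}} - e^{-t^2/2}| \le |\Ex{e^{itX^*}}| + e^{-t^2/2}$; the contribution of $e^{-t^2/2}$ over $|t|\ge K$ is at most $2e^{-K^2/2}\le 2e^{-K}$ (using $K\ge 2$ so $K^2/2\ge K$... actually $K\ge 2$ gives $K^2/2 \geq K$), and by symmetry the contribution of $|\Ex{e^{itX^*}}|$ is $2\int_K^{\pi\sigma}|\Ex{e^{itX^*}}|\,dt$, matching the first term on the right-hand side. The remaining piece is $\frac{1}{2\pi}\int_{-K}^{K}|\Ex{e^{itX^*}} - e^{-t^2/2}|\,dt$, and this is exactly where the quantitative central limit theorem of Barbour, Karo\'nski and Ruci\'nski~\cite{barbour1989central} enters: it provides a bound of the form $|\Ex{e^{itX^*}} - e^{-t^2/2}| \le C\gamma\,(1+|t|^3)\,e^{-t^2/3}$ (or similar) for $|t|$ not too large, where $\gamma$ is the relevant third-moment / dependency parameter. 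Integrating $(1+|t|^3)e^{-t^2/3}$ against $dt$ over $[-K,K]$ gives something of order $K$ times $\gamma$, and one checks that for triangle counts the parameter $\gamma$ is of order $(np^{1/2})^{-1}$, yielding the claimed $O\!\left(\frac{K}{np^{1/2}}\right)$.

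The main obstacle is the middle regime: extracting from~\cite{barbour1989central} a clean Berry--Esseen-type bound on the characteristic function difference $|\Ex{e^{itX^*}} - e^{-t^2/2}|$ that is valid up to $|t|\le K$ with the correct dependence on $n$ and $p$. Their paper states results in Kolmogorov distance, so I would need to either re-run their Stein's method argument at the level of characteristic functions (tracking the smoothing parameter and the error in terms of $n$ and $p$), or invoke a characteristic-function version of their estimate and verify that the relevant expansion coefficients for $X_{K_3}$ behave as $\Theta(np^{1/2})^{-1}$ in this sparse range — this is the computation of $\sigma^2 = \Var{X}$ and the third cumulant of $X$, both standard but requiring care since we are below constant $p$. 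Everything else (the inversion formula, the splitting, the Gaussian tail estimates) is routine. The factor $2$ in front of the integral and of $e^{-K}$ in the statement suggests the authors are being slightly lossy to keep constants clean, which is consistent with the $\frac{1}{2\pi}$ normalization and the doubling from the $\pm t$ symmetry.
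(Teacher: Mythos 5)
Your high-level structure matches the paper: Fourier inversion on the lattice $\LL$, writing $|\N(x)-\sigma\Pr{X^*=x}|$ as a difference of two inversion integrals, splitting at $|t|=K$, using the triangle inequality on $|t|\ge K$ to peel off $2\int_K^{\pi\sigma}|\Ex{e^{itX^*}}|\,dt$, and bounding the Gaussian tails by $e^{-K}$ using $K\ge 2$. That part is essentially identical to the paper's argument, including the observation that the constants are lossy.

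Where your proposal has a genuine gap is precisely the step you flagged as ``the main obstacle'': the bound $\int_{-K}^{K}|\Ex{e^{itX^*}}-e^{-t^2/2}|\,dt = O(K/(np^{1/2}))$. You observe that~\cite{barbour1989central} states its results in Kolmogorov distance and that you would either need to re-derive a characteristic-function version of their Stein argument or cite such a bound, but you do not carry out either route. The paper's resolution is a third route which your proposal does not find: write $\Ex{e^{itX^*}}-e^{-t^2/2}$ in terms of the real and imaginary parts $\Ex{\cos(tX^*)-\cos(tN)}$ and $\Ex{\sin(tX^*)-\sin(tN)}$, observe that $\cos$ and $\sin$ are $1$-Lipschitz so that each of these is bounded by the Wasserstein distance $d_{\mathcal W}(tX^*,tN)$, and then bound that Wasserstein distance via the Stein-method pair of Lemmas~\ref{lemma:Stein-lemma} and~\ref{lemma:new-rucinski} (the latter being Lemma~1 of~\cite{barbour1989central}, which controls $|\Ex{X^*f(X^*)-f'(X^*)}|$ for a fixed smooth function class in terms of $\sup|f''|$). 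This cosine/sine splitting is the key trick that converts the available Wasserstein-type Stein estimate into the needed integrated characteristic-function bound without ever producing a pointwise bound on $|\Ex{e^{itX^*}}-e^{-t^2/2}|$.

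One small arithmetic slip to note: you claim that integrating a hypothetical bound of the form $\gamma(1+|t|^3)e^{-t^2/3}$ over $[-K,K]$ ``gives something of order $K$ times $\gamma$''. It does not; the exponential decay makes that integral $O(\gamma)$, independent of $K$. This is harmless here since $O(\gamma)\le O(K\gamma)$, but it indicates you have not actually traced how the factor of $K$ arises. In the paper's argument the $K$ comes from integrating a $t$-independent Wasserstein bound over an interval of length $2K$, which is a structurally different mechanism. You should make sure you understand which mechanism produces the $K$ before treating this step as routine.
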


	 In order to bound $\int\limits_{K}^{\pi \sigma}|\Ex{e^{itX^*}}|dt$, we split the integral into two parts and use our following two theorems, which are the main technical results of our paper.  
	 Theorem~\ref{thm:main-3} and Theorem~\ref{thm:main-4} shall be proven in Sections~\ref{sec:second-regime} and~\ref{section:end}, respectively.

	 \begin{theorem}\label{thm:main-3}
		Let $n \in \mathbb{N}$, $\gamma \in (0,1/8)$ and $p \in (4n^{-1/2}, 1/2)$. 
		For every $(2^{21}p^2n)^{1/2+\gamma}< t < \sigma/2^{10}$ we have
		\begin{align*}
			| \Ex{e^{itX/\sigma}}| \le \exp(-t^{2\gamma}).
		\end{align*}
	 \end{theorem}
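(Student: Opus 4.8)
The goal is to bound $|\mathbb{E}(e^{itX/\sigma})|$ for $t$ in the intermediate range $(2^{21}p^2n)^{1/2+\gamma} < t < \sigma/2^{10}$, where $X = X_{K_3}$. The plan is to exploit a \emph{decoupling} of the triangle count: fix a random equipartition of $[n]$ into three parts $V_1, V_2, V_3$ (or expose only a carefully chosen subset of potential edges), and isolate the contribution of triangles with one vertex in each part. Writing $X = X_0 + X_1$, where $X_1$ counts "cross" triangles and $X_0$ the rest, conditioning on everything except the edges between $V_1$ and $V_2$ turns $X_1$ into a sum $\sum_{v \in V_3} \sum_{(u,w)} \mathbf{1}[uv, vw \in G]\,\mathbf{1}[uw \in G]$; conditioning further on all edges incident to $V_3$ and on $G[V_1], G[V_2]$, the remaining randomness is the bipartite graph between $V_1$ and $V_2$, and for each such edge $uw$ the number of triangles it completes through $V_3$ is a fixed weight $a_{uw} \in \{0,1,\dots\}$. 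Hence conditionally $X_1$ is a weighted sum of independent Bernoulli$(p)$ variables, $\sum a_{uw} \xi_{uw}$, so its conditional characteristic function factorizes as $\prod_{uw} (1 - p + p\,e^{it\,a_{uw}/\sigma})$.

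\textbf{The oscillation estimate.} By the triangle inequality (in the form $|\mathbb{E}(Z)| \le \mathbb{E}|Z|$ after conditioning), it suffices to show that with high probability over the conditioning, the product $\prod_{uw}|1 - p + p e^{it a_{uw}/\sigma}|$ is at most $\exp(-t^{2\gamma})$. Using $|1 - p + p e^{i\theta}|^2 = 1 - 2p(1-p)(1 - \cos\theta) \le \exp(-2p(1-p)(1-\cos\theta))$, this reduces to a lower bound on $\sum_{uw} (1 - \cos(t a_{uw}/\sigma))$. Since $\sigma^2 = \Theta(n^4 p^3(1-p))$ in this range (the $m_2$-regime, dominated by the "cherry" term), and a typical edge $uw$ lies in roughly $np$ triangles through $V_3$, the typical value of $a_{uw}$ is $\Theta(np)$ and $t a_{uw}/\sigma \approx t/(n p^{1/2})$, which is exactly the scale that becomes non-negligible precisely when $t \gtrsim n p^{1/2} \approx (p^2 n)^{1/2} \cdot p^{1/2} n^{1/2}$; one must check the arithmetic matches the stated threshold $(2^{21}p^2 n)^{1/2+\gamma}$. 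The key point is that $a_{uw}$ is \emph{not} concentrated at a single value — its fluctuations are of the same order as its mean — so a positive fraction of the $\Theta(n^2 p)$ edges $uw$ have $a_{uw}$ spread over an interval of length $\gg \sigma/t$, forcing $\sum_{uw}(1 - \cos(t a_{uw}/\sigma)) = \Omega(n^2 p)$, and then $2p \cdot \Omega(n^2 p) = \Omega(n^2 p^2) \gg t^{2\gamma}$ comfortably since $t < \sigma/2^{10} = O(n^2 p^{3/2})$.

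\textbf{Making the variability rigorous.} The crux is quantifying the anti-concentration of the random weights $\{a_{uw}\}$. Here is the approach I would take: choose the conditioning so that for a fixed pair $uw$, the count $a_{uw} = \sum_{v \in V_3} \mathbf{1}[uv \in G]\mathbf{1}[vw \in G]$ is $\mathrm{Bin}$-like with mean $|V_3| p^2 = \Theta(n p^2)$ and standard deviation $\Theta((np^2)^{1/2})$; a Berry–Esseen or direct anti-concentration bound shows $a_{uw}$ is spread over $\Theta((np^2)^{1/2})$ consecutive integers. Thus $t a_{uw}/\sigma$ varies over an interval of length $\asymp t (np^2)^{1/2}/\sigma \asymp t/(n p^{1/2} \cdot (np^2)^{-1/2}) $; one checks this is $\gg 1$ exactly when $t \gg \sigma/(np^2)^{1/2} \asymp (p^2 n)^{1/2}$, up to the polynomial slack absorbed by the exponent $1/2 + \gamma$. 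For such $t$, each edge $uw$ individually contributes $\mathbb{E}_{a_{uw}}[1 - \cos(t a_{uw}/\sigma)] = \Omega(1)$ (since a smooth unit-variance distribution dilated past scale $1$ makes $\cos$ essentially average to $0$). Summing over a linear-in-$n^2 p$ family of edges that are "independent enough" (e.g., a matching between $V_1$ and $V_2$, whose weights $a_{uw}$ depend on disjoint edge sets incident to $V_3$, hence are genuinely independent), and applying a concentration inequality (Chernoff/Azuma) to the sum $\sum_{uw \in M}(1 - \cos(t a_{uw}/\sigma))$ of bounded independent terms, we get that this sum is $\Omega(|M|) = \Omega(n^2 p)$ except with probability $\exp(-\Omega(n^2 p))$. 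Since $t^{2\gamma} < \sigma^{2\gamma} \ll n^2 p$ and the failure probability $\exp(-\Omega(n^2 p)) \ll \exp(-t^{2\gamma})$, combining the bad-event bound with the conditional factorized estimate yields $|\mathbb{E}(e^{itX/\sigma})| \le \exp(-\Omega(n^2 p^2)) + \exp(-\Omega(n^2 p)) \le \exp(-t^{2\gamma})$, as desired.

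\textbf{Main obstacle.} The delicate part is not the factorization or the Chernoff step but the \emph{anti-concentration of the weights combined with the phase scale}: one needs the interval over which $t a_{uw}/\sigma$ ranges to be $\gg 1$ for \emph{every} $t$ above the threshold while simultaneously keeping the weights $a_{uw}$ genuinely independent across a family of size $\Omega(n^2 p)$. Balancing these — choosing the partition/exposure so that enough edges have independent, suitably-spread weights, and tracking constants so the threshold lands at exactly $(2^{21}p^2 n)^{1/2+\gamma}$ rather than merely $O((p^2 n)^{1/2+\gamma})$ — is where the real work lies, and it is also where the lower bound $p > 4n^{-1/2}$ (so that $np^2 > 16$, i.e.\ the weights have growing variance) enters essentially.
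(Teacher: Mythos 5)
Your proposal has the right broad shape (condition, factorize, exploit anti-concentration of weights), but it departs from the paper's argument in ways that introduce genuine gaps; two of them are serious enough that the proof as written does not go through.

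\textbf{The factorization is not valid without decoupling.} You condition on everything except $G[V_1,V_2]$ and then factorize the conditional characteristic function over the independent indicators $\{x_{uw}\}_{u\in V_1,w\in V_2}$ as $\prod_{uw}(1-p+pe^{ita_{uw}/\sigma})$. But $X$ conditioned on $G[V_1], G[V_2], G[V_3]$ and the edges to $V_3$ is \emph{not linear} in those indicators: a triangle with two vertices $u_1,u_2 \in V_1$ and one vertex $w\in V_2$ (and $u_1u_2$ already revealed) contributes the quadratic monomial $x_{u_1 w}x_{u_2 w}$, and likewise with the roles of $V_1$ and $V_2$ swapped. The conditional characteristic function of a quadratic polynomial of Bernoullis does not factorize, so the product bound you write down is not an upper bound for $|\cE_{G[V_1,V_2]}(e^{itX/\sigma})|$. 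This is precisely the obstacle that the paper's decoupling trick is designed to kill: two independent copies $G^0, G^1$ of $G(n,p)$ are introduced and Cauchy--Schwartz is applied \emph{twice} (hence the fourth power in Lemma~\ref{lemma:CStrick-2}), with each application annihilating all monomials that do not use the newly swapped block of edges. What survives is exactly the rainbow-triangle term $\alpha = \sum_{f\in A}\alpha_f x_f^0$, which \emph{is} linear in $x_A^0$ after conditioning on $x_B^{0,1}$; your single-copy conditional $X$ is not.

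\textbf{Unsigned weights cannot reach the lower end of the $t$ range.} Your weight $a_{uw}$ is a nonnegative count with mean $\Theta(mp^2)$ (where $m=|V_3|$) and standard deviation $\Theta(pm^{1/2})$ --- the fluctuation is a factor $(mp^2)^{-1/2}$ smaller than the mean. To extract a gain from $\prod_{uw}|1-p+pe^{ita_{uw}/\sigma}|$ you need $1-\cos(ta_{uw}/\sigma)$ to be bounded below for many $uw$, and since the mean phase $tmp^2/\sigma$ can land arbitrarily close to a multiple of $2\pi$, you can only guarantee this once the \emph{spread} of phases $t\cdot pm^{1/2}/\sigma$ exceeds one period, i.e.\ $t\gtrsim \sigma/(pm^{1/2}) \sim n^2p^{3/2}/m^{1/2}$. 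Even with $m\sim n$ this threshold is $\sim n^{3/2}p^{3/2}$, which is far above the starting point $(p^2n)^{1/2+\gamma}\sim pn^{1/2}$ required by the theorem. The paper's $\alpha_f$ has mean zero, so its \emph{typical magnitude} is $\Theta(pm^{1/2})$ rather than $\Theta(mp^2)$; restricting to the set $A'$ where $|\alpha_f|\in(\tfrac12 pm^{1/2},2^4 pm^{1/2})$ (Lemma~\ref{lemma:typical_alpha}) and choosing $t\le \pi\sigma/(2^4 pm^{1/2})$ guarantees no wrap-around, $\|t\alpha_f/(2\pi\sigma)\|=t\alpha_f/(2\pi\sigma)$, and the quadratic gain per edge becomes nontrivial already at $t\gtrsim pn/m^{1/2}$, i.e.\ $t\gtrsim (p^2n)^{1/2}$ for $m\sim n$. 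This gap between ``fluctuation scale'' and ``typical magnitude'' is exactly why the signed two-copy construction is essential, not a cosmetic difference.

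\textbf{You must vary $|V_3|$.} You fix $|V_3|=\Theta(n)$. In the paper each choice of $m$ covers a window $t\in\big(\Theta(pn/m^{1/2}),\Theta(\sigma/(pm^{1/2}))\big)$, and the theorem's range $(p^2n)^{1/2+\gamma}<t<\sigma/2^{10}$ is covered by letting $m$ run from $\lceil 4/p^2\rceil$ (giving the top end $t\sim\sigma$) down to $\lfloor n/2\rfloor$ (giving the bottom end $t\sim (p^2n)^{1/2}$), checking that consecutive intervals $I_m$, $I_{m+1}$ overlap. With a single $m$ you obtain only one such window, not the full range. (The constraint $4/p^2 \le m$ is where the hypothesis $p>4n^{-1/2}$ enters --- so that the smallest useful $m$ is still $\le n/2$ --- rather than, as you suggest, to make the weight variance grow.)

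Two minor slips, for the record: the variance is $\sigma^2=\Theta(n^4p^5)$ in this range, not $\Theta(n^4p^3(1-p))$; and a typical $uw$ has $\Theta(mp^2)$, not $\Theta(mp)$, common neighbours in $V_3$ (your later paragraph has it right).
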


        \begin{theorem}\label{thm:main-4}
		Let $n \in \mathbb{N}$ and $p\in (n^{-1/2},1/2)$.
		For every  $\sigma/2^{12} < t \le \pi \sigma$ we have
		\begin{align*}
			| \Ex{e^{itX/\sigma}}| \le \exp(-\Omega(\sqrt{n})).
		\end{align*}
        \end{theorem}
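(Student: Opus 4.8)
The plan is to bound $|\Ex{e^{itX/\sigma}}|$ for $t$ in the ``top'' regime $(\sigma/2^{12},\pi\sigma]$ by exposing a small but structurally rich portion of the random graph on which $X$ behaves, conditionally, like a sum of many independent integer-valued contributions whose characteristic function is already uniformly bounded away from $1$ on the relevant arc. First I would fix a small set of ``pivot'' vertices or edges --- say a linear-sized matching $M=\{e_1,\dots,e_m\}$ with $m=\Theta(n)$ of vertex-disjoint potential edges, together with, for each $e_i$, a linear-sized collection of common-neighbour candidates --- and condition on the entire random graph \emph{except} the indicator variables $\xi_1,\dots,\xi_m$ of the edges in $M$. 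Writing $X = X_0 + \sum_{i=1}^m \xi_i D_i$, where $D_i$ is the number of (conditionally determined) cherries through $e_i$ and $X_0$ is measurable with respect to the conditioning, the conditional characteristic function factorises as
\[
\big|\Ex{e^{itX/\sigma}\mid \mathcal{F}}\big| = \prod_{i=1}^m \big| 1 - p + p\, e^{it D_i/\sigma}\big|
= \prod_{i=1}^m \big( 1 - 2p(1-p)(1-\cos(t D_i/\sigma)) \big)^{1/2}.
\]
Using $1-\cos\theta \ge \tfrac{2}{\pi^2}\,\|\theta/(2\pi)\|^2$ (with $\|\cdot\|$ the distance to the nearest integer) and $1-x\le e^{-x}$, this is at most $\exp\big(-c\,p\sum_i \|tD_i/(2\pi\sigma)\|^2\big)$ for an absolute constant $c>0$.

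The crux is then to show that, with probability $1 - e^{-\Omega(\sqrt n)}$ over the conditioning, a constant fraction of the $m$ fractional parts $\|tD_i/(2\pi\sigma)\|$ are bounded away from $0$, \emph{simultaneously for every} $t\in(\sigma/2^{12},\pi\sigma]$. For a single $t$ this is an anti-concentration statement: since $\sigma = \Theta(n^2 p^2 \sqrt{1-p})$ and a typical $D_i$ is $\Theta(n p^2)$, the ``frequency'' $t/\sigma$ ranges over an interval of length $\Theta(1/\sigma)$ down to $\Theta(1/n p^2)$; one checks that the $D_i$ are spread out enough (they are independent sums of $\Theta(n)$ Bernoulli$(p^2)$ variables given the conditioning, hence have a local CLT spreading of width $\Theta(\sqrt{n}\,p)$) that $\sum_i \|tD_i/(2\pi\sigma)\|^2 \ge \Omega(m)$ except with exponentially small probability. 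To promote this from a single $t$ to all $t$ simultaneously I would use a net argument on $(\sigma/2^{12},\pi\sigma]$: the function $t\mapsto \sum_i\|tD_i/(2\pi\sigma)\|^2$ has Lipschitz constant at most $\sum_i D_i/\sigma = O(n^3 p^3/\sigma) = O(1/\sqrt{1-p}) = O(1)$ relative to... actually $O(\sigma)$ over the whole range, so a net of size $\mathrm{poly}(n)\cdot e^{O(\sqrt n)}$ --- or better, exploiting monotonicity/periodicity to use a net of size $e^{o(\sqrt n)}$ --- combined with a union bound over the net against the $e^{-\Omega(\sqrt n)}$ tail bound gives the claim. Putting $p\sum_i \|tD_i/(2\pi\sigma)\|^2 \ge \Omega(pm) = \Omega(pn)$; if $p n \gtrsim \sqrt n$, i.e.\ $p\gtrsim n^{-1/2}$, which is exactly our hypothesis, this yields $|\Ex{e^{itX/\sigma}\mid\mathcal F}| \le e^{-\Omega(\sqrt n)}$, and then $|\Ex{e^{itX/\sigma}}| \le \Ex{|\Ex{e^{itX/\sigma}\mid\mathcal F}|} \le e^{-\Omega(\sqrt n)}$ after absorbing the exceptional event.

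The main obstacle I anticipate is the simultaneity in $t$: a crude net has size exponential in $\sigma \gg \sqrt n$, which would swamp the per-point bound. Overcoming this requires either (i) observing that it suffices to handle $t/\sigma$ in a \emph{bounded} range after reducing modulo the (near-)period structure coming from $\mathrm{gcd}$ considerations of the $D_i$, so that the effective net has size $e^{o(\sqrt n)}$; or (ii) strengthening the per-point anti-concentration bound to $e^{-\Omega(n)}$ (plausible since we are summing $\Theta(n)$ roughly independent bounded contributions and only need to beat $e^{-\Omega(\sqrt n)}$) and then a net of size $e^{O(\sqrt n)}$ suffices comfortably. I would pursue route (ii), as the stronger single-$t$ bound should follow from a standard Chernoff/Azuma estimate applied to $\sum_i \mathbf 1[\|tD_i/(2\pi\sigma)\|\ge 1/10]$, whose conditional mean is $\Omega(m)=\Omega(n)$ by the local CLT spreading of each $D_i$. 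The remaining routine points --- verifying $\sigma=\Theta(n^2p^2\sqrt{1-p})$ and the local-limit width of $D_i$, and checking that the exceptional events in the conditioning (e.g.\ some $D_i$ atypically small) carry probability $e^{-\Omega(n)}$ --- are all classical for this density range.
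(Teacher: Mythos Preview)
Your approach differs substantially from the paper's, and while the overall architecture could be made to work, the write-up contains a misplaced worry and an unacknowledged gap.

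The misplaced worry: no net over $t$ is needed. The theorem is a pointwise bound --- for each fixed $t \in (\sigma/2^{12}, \pi\sigma]$ one wants $|\Ex{e^{itX/\sigma}}| \le e^{-\Omega(\sqrt n)}$. Since $|\Ex{e^{itX/\sigma}}| \le \cE\big(|\cE(e^{itX/\sigma}\mid \mathcal{F})|\big)$, a (possibly $t$-dependent) bad event of probability $e^{-\Omega(\sqrt n)}$ in the conditioning is perfectly acceptable. The ``simultaneity in $t$'' that you identify as the main obstacle is simply not required, and the entire net discussion can be dropped.

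The unacknowledged gap: you appeal to ``a standard Chernoff/Azuma estimate'' for $\sum_i \mathds{1}[\|tD_i/(2\pi\sigma)\| \ge 1/10]$, but the $D_i$ are \emph{not} independent --- for $i\ne j$ the codegree $D_i$ depends on the four edges between $\{u_i,v_i\}$ and $\{u_j,v_j\}$, which also enter $D_j$, so the dependency graph on the indices is complete. Plain Chernoff does not apply, and McDiarmid over the $\Theta(n^2)$ underlying edge variables (each moving at most two of the $D_i$'s) gives only $\exp(-\Omega(1))$ for order-$n$ deviations. What does rescue the argument is that each edge variable is read by at most \emph{two} of the $D_i$'s, so a read-$k$ tail bound of Gavinsky--Lovett--Saks--Srinivasan type yields the required $\exp(-\Omega(n))$; but this is a genuine additional ingredient, not a ``standard Chernoff/Azuma.'' (Also: $\sigma^2 = \Theta(n^4 p^{5})$ in this regime, not $\Theta(n^4 p^4(1-p))$, so several of your order-of-magnitude checks are off.)

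The paper instead uses a decoupling trick that removes both difficulties at once. Working with two independent copies $G^0, G^1$ of $G(n,p)$ and a \emph{single} distinguished vertex $w$, two applications of Cauchy--Schwarz give $|\Ex{e^{itX/\sigma}}|^4 \le \cE_{x_B^{0,1}}|\cE_{x_A^0}(e^{it\alpha/\sigma})|$, where $\alpha = \sum_{f}\alpha_f x_f^0$ and $\alpha_{uv} = (x^0_{uw}-x^1_{uw})(x^0_{vw}-x^1_{vw}) \in \{-1,0,1\}$. Because the weights are $\pm 1$, the hypothesis $t \le \pi\sigma$ automatically forces $\|t\alpha_f/(2\pi\sigma)\| = |t|/(2\pi\sigma)$ whenever $|\alpha_f|=1$, so no anti-concentration of fractional parts is needed at all; and the only probabilistic input is a single Chernoff bound on the binomial $|N_{G^0}(w)\Delta N_{G^1}(w)|$, which immediately gives $|\{f:|\alpha_f|=1\}| \ge (pn)^2/16$ with probability $1-e^{-\Omega(pn)}$. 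This yields $|\Ex{e^{itX/\sigma}}|^4 \le \exp(-\Omega(p^3 n^2 t^2/\sigma^2)) + e^{-\Omega(pn)}$, and both terms are $e^{-\Omega(\sqrt n)}$ once $t \ge \sigma/2^{12}$ and $p \ge n^{-1/2}$.
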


	Now we are ready to prove Theorem~\ref{thm:main}.

	\begin{proof}[Proof of Theorem~\ref{thm:main}]
    
	As mentioned before, a local central limit theorem for triangle counts was obtained by R\"ollin and Ross~\cite{rollin2015local} in the range where $n^{-1} \ll p = O(n^{-1/2})$ and extended by Gilmer and Kopparty~\cite{gilmer2016local} for constant $p \in (0,1)$.
    Thus, it suffices to show Theorem~\ref{thm:main} for $p \in (4n^{-1/2},1/2)$.

		Let $\gamma \in (0,1/8)$ and set $K := (\log n)^{\frac{8}{\gamma}} (p^2 n)^{\frac{1}{2}+\gamma }$.
		It follows from Theorem~\ref{thm:main-3}  and Theorem~\ref{thm:main-4} that 
		\begin{align}\label{eq:main-10}
			\int\limits_{K}^{\pi \sigma}|\Ex{e^{itX^*}}|dt & \le \int\limits_{K}^{ \sigma/2^{10}}|\Ex{e^{itX^*}}|dt + \sigma\exp(-\Omega(\sqrt{n})) \nonumber\\
			& \le \int\limits_{K}^{ \sigma/2^{10}} e^{-t^{2\gamma}} dt + \exp(-\Omega(\sqrt{n})) \nonumber \\
			& \le 
			\exp(-\Omega(\log n)^{8}).
		\end{align}
		As $e^{-K} \le \exp(-(\log n)^8)$ for all sufficiently large $n$, it follows from Theorem~\ref{theorem:first-regime-new} combined with~\eqref{eq:main-10}
		that
		\begin{align*}
			\sup_{x \in \LL }\big|\thinspace \N(x)-\sigma\cdot \Pr{X^{*} = x} \big|
			\le \exp(-\Omega(\log n)^{8}) + O \left(\dfrac{(\log n)^{\frac{8}{\gamma}} (p^2 n)^{\frac{1}{2}+\gamma }}{np^{\frac{1}{2}}} \right) \le n^{-\frac{1}{2}+2\gamma}p^{\frac{1}{2}+2\gamma}.
		\end{align*}
	\end{proof}

	Now we are ready to prove Theorem~\ref{thm:main-2}.

	\begin{proof}[Proof of Theorem~\ref{thm:main-2}]
		Let $\mathcal{L}$ be the support of $X^{*}=\frac{X-\mu}{\sigma}$, $\gamma \in (0,1)$ and $M = (\log n)^8$.
		We first change variables and split the sum into two parts:
		\begin{align}\label{eq:proof-main-result}
			&\sum \limits_{x\in \mathbb{N}}\left|\thinspace \frac{1}{\sqrt{2\pi \sigma^2}}e^{-\frac{(x-\mu)^2}{2\sigma^2}}- \Pr{X = x} \right| =\frac{1}{\sigma}
			\sum \limits_{\substack{x\in \LL }}\left|\thinspace \frac{1}{\sqrt{2\pi}}e^{-x^2/2} - \sigma\cdot\Pr{X^* = x} \right| \nonumber \\
			& \qquad \quad \le \frac{1}{\sigma}\sum \limits_{\substack{x\in \LL \\ |x| < M}}\left|\thinspace \N(x) - \sigma \cdot\Pr{X^* = x} \right| + 
			\sum \limits_{\substack{x\in \LL \\ |x| > M}} \dfrac{\N(x)}{\sigma} +\Pr{X^* \geq M}  \nonumber \\
			& \qquad \quad\le O(M  n^{-\frac{1}{2}+\gamma}p^{\frac{1}{2}+\gamma}) +  e^{-M} + \Pr{|X^*| \ge M}.
 		\end{align}
		In the last inequality, in the first term we used Theorem~\ref{thm:main} and that the sum has $O(\sigma M)$ terms.

		We now would like to apply Kim--Vu's inequality~\cite{kim2000concentration} to bound $\Pr{|X^*| \ge M}$ (see Theorem 3A in~\cite{janson2002infamous} or Theorem~\ref{thm:kimvu} below).
        Let $x_{e}$ denote the indicator function of the event that $e \in E(G(n,p))$.
        We can write $X$ as 
        \begin{align*}
            X = \sum \limits_{e,f,g} x_{e}x_{f}x_{g}
        \end{align*}
        where the sum is over all edges $e,f,g \in E(K_n)$ which form a triangle.
        For a set $A \se \{x_{e}: e \in E(K_n)\}$, note that the partial derivative $\partial_{A}(X)$ is a non-zero polynomial if and only if $A \se \{ x_{e},x_{f},x_{g}\}$, for some edges $e,f,g$ which form a triangle.
        By writing $e=uv$, $f=vw$ and $g=wu$, we obtain
        \[ \partial_{x_{e}}(X) = \sum_w x_{uw}x_{vw}, \quad \partial_{x_{e},x_{f}}(X) = x_{uw} \quad \text{and} \quad \partial_{x_{e},x_{f},x_{g}}(X) = 1.\]
        As $\partial_{\emptyset}(X)= X$, it follows from the definition of $\mathbb{E}_j$ in~\eqref{eq:partial-derivative} that 
        $\mathbb{E}_0(X) = O(n^3p^3)$ and $\mathbb{E}_1(X)=O(np^2)$.
        As the variance $\sigma^2$ of $X$ is of the same order as the expected number of $K_4^{-}$ when $p\ge n^{-1/2}$, we have $\sigma^2= \Theta(n^4p^5)$.
        This implies that $\mathbb{E}_0(X) \mathbb{E}_1(X) = O(\sigma^2)$, and hence it follows from Kim--Vu's inequality that 
			\begin{align}\label{eq:proof-main-result-2}
				\Pr{|X - \Ex{X}|\ge (\log n)^8 \sigma} \le
				e^{-\Omega((\log n)^2)}.
			\end{align}
			By combining \eqref{eq:proof-main-result} and \eqref{eq:proof-main-result-2}, we conclude that the left-hand side of \eqref{eq:proof-main-result} is $O(n^{-\frac{1}{2}+2\gamma}p^{\frac{1}{2}+\gamma})$.
	\end{proof}
\vspace*{-0.5cm}
 \subsection{Proof overview}
 Our strategy relies on relating the characteristic function of $X$ to that of a sum of independent, weighted Bernoulli random variables. Therefore, in order to reach higher values of $t$, we must relate $X$ to a sum of Bernoulli random variables with progressively smaller weights.
	
	This motivates the introduction of what we call the $\alpha$-function. Let $G^0$ and $G^1$ be two independent copies of $G(n,p)$. For and edge $e$ and $i\in \{0,1\}$ we write $x_e^i=\mathds{1}_{e\in G^i}$ and for a set of edges $A$, we write $x_A^i=(x_e^i)_{e\in A}$. 
    Let $m=m(n)$ be a positive integer and let
    $P_1 \cup P_2 \cup P_3$ be a partition of $[n]$ with $|P_1|= \lfloor (n-m)/2 \rfloor$, $|P_2|= \lceil (n-m)/2 \rceil$ and $|P_3|=m$.
    For simplicity, we set
	$A = \{uv: u \in P_1, v \in P_2\}$,
	$B_i = \{uv: u \in P_i, v \in P_3\}$ for $i \in \{1,2\}$ and $B=B_1\cup B_2$.
	We omit the dependencies of these sets on the partition, as they are clear from context.
	Finally, for $uv \in A$ define
	\begin{align*}
		\alpha_{uv} := \sum \limits_{w \in P_3}(x_{uw}^0-x_{uw}^1)(x_{vw}^0-x_{vw}^1).
	\end{align*}
	That is, $\alpha_f$ is a sum of $\pm 1$ terms over the common neighbours of $f$ in $G^0\Delta G^1$, the symmetric difference between $G^0$ and $G^1$.
	Note that the expression $x_{uw}^{0}-x_{uw}^{1}$ assigns value $+1$ to edges in $G^0\setminus G^1$ and $-1$ to edges in $G^1\setminus G^0$. Now, define
	\[\alpha = \sum \limits_{f\in A} \alpha_f x_f^0.\]
	In Lemma~\ref{lemma:CStrick-2} we use a \textit{decoupling trick} to show that
	\[|\Ex{e^{itX/\sigma }}|^4\le  \cE_{x_B^{0,1}} |\cE_{x_A^0} ( e^{it\alpha/\sigma } )|.\]
	
	Vaguely speaking, the $\pm 1$ signs from $\alpha$ come from replacing the square of the absolute value of a complex number by the product of itself with its conjugate.  We aim to show that a constant proportion of the edges $f\in A$ attain a typical value of $\alpha_f$. Even though $\alpha_f$ has expectation $0$, we show that with high probability 
	%(w.r.t $G^0, G^1$ on $B$) 
	many $f\in A$ satisfy $|\alpha_f| = \Theta (pm^{1/2})$ as long as $pm^{1/2}$ is a large enough constant. 
	By conditioning on these events, we show inside the proof of Theorem \ref{thm:main-3} that
	\[|\cE_{x_A^0}(e^{it\alpha/\sigma})|\leq \exp\left(-\Omega(t^2m/p^2n^2)\right).\]
	For this to be integrable with respect to $t$, $m$ has to be slightly larger than $p^2n^2/t^2$. 
     We emphasize that $m$, which is the size of the part $P_3$, can be any parameter between $1$ and $n$ and is allowed to be a function of $t$.
     On the other hand, we also ask that $t\alpha_f/2\pi \sigma \leq 1/2$, as argued before, or equivalently that $t= O(\sigma/m^{1/2})$. By decreasing the order of magnitude of $m$ from $n$ to $1/p^2$ we are then able to show in Theorem \ref{thm:main-3} that
	\[|\Ex{e^{itX/\sigma}}|\leq \exp(-t^{\Theta(1)})\]
	\noindent for all $(p^2n)^{1/2+\gamma}<t< \sigma/2^{10}$, for any $\gamma>0$.
	
	We are only left with the case $t=\Theta(\sigma)$. To be able to reach the value of $t$ up to $\pi \sigma$ we still use the $\alpha$ function, but we must create a situation where the typical value of $|\alpha_f|$ is $1$. We then explore the \textit{role of a single vertex}, i.e. the case where $P_3=\{v\}$. In this case, $\alpha$ is just a signed sum over the edges in $N(v,G^0)\Delta N(v,G^1)$. Since this set has roughly $pn$ vertices, each of the $\Theta((pn)^2)$ pairs inside of it are candidates for having its $|\alpha_f|=1$. Therefore, $\alpha$ becomes a sum of roughly $(pn)^2$ Bernoulli random variables with $\pm 1$ weights. We use that in the proof of Theorem \ref{thm:main-4} to show that
	\[|\Ex{e^{itX\sigma}}|\leq \exp(-\Omega(\sqrt{n})),\]
	\noindent for all $\sigma/2^{12} \leq t \leq \pi \sigma$, which finishes the sketch of proof of Theorem~\ref{thm:main-4}.
	
	\section{Toolbox}\label{sec:tool-box}
	
	In this section, we state the probabilistic tools used in the proof of our main theorem. The first one gives us a bound on the characteristic function of Binomial random variables. 
	For a proof, see Lemma 1 in~\cite{gilmer2016local}.
	Below, $\|x\|$ denotes the distance from $x$ to the nearest integer.

	\begin{lemma}\label{lemma:charfuncbinomial}
		Let $p \in [0,1]$, $n \in \mathbb{N}$ and $t \in \mathbb{R}$. If $B$ is a $p$-Bernoulli random variable, then
		\[|\Ex{e^{itB}}| \le 1-8p(1-p)\cdot \left \| \dfrac{t}{2\pi} \right \|^2 .\]
	\end{lemma}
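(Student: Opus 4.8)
The plan is to compute the characteristic function of $B$ explicitly and then reduce the bound to two elementary facts. Since $B$ is $p$-Bernoulli, $\Ex{e^{itB}} = (1-p) + p e^{it}$, and hence
\[
\bigl|\Ex{e^{itB}}\bigr|^2 = \bigl((1-p) + p\cos t\bigr)^2 + p^2 \sin^2 t = 1 - 2p(1-p)(1 - \cos t) = 1 - 4p(1-p)\sin^2(t/2),
\]
where in the last step I use the half-angle identity $1 - \cos t = 2\sin^2(t/2)$. Since $p(1-p)\le 1/4$, the quantity $4p(1-p)\sin^2(t/2)$ lies in $[0,1]$, so the inequality $\sqrt{1-x}\le 1-x/2$ applies and yields
\[
\bigl|\Ex{e^{itB}}\bigr| \le 1 - 2p(1-p)\sin^2(t/2).
\]

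It then remains to bound $\sin^2(t/2)$ from below by $4\,\|t/2\pi\|^2$. Setting $\theta = t/2\pi$, this is the classical inequality $|\sin(\pi\theta)| \ge 2\|\theta\|$: on $[0,1/2]$ the map $x\mapsto \sin(\pi x)$ is concave, so it lies above the chord through $(0,0)$ and $(1/2,1)$, i.e.\ $\sin(\pi x)\ge 2x = 2\|x\|$; the general case follows because both $|\sin(\pi\theta)|$ and $\|\theta\|$ are $1$-periodic and symmetric about $1/2$ on $[0,1]$. Squaring this and combining with the previous display gives
\[
\bigl|\Ex{e^{itB}}\bigr| \le 1 - 2p(1-p)\cdot 4\left\|\tfrac{t}{2\pi}\right\|^2 = 1 - 8p(1-p)\left\|\tfrac{t}{2\pi}\right\|^2,
\]
as claimed. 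There is no real obstacle here; the only step requiring a touch of care is checking that the argument of $\sqrt{1-x}$ stays in $[0,1]$ so that $\sqrt{1-x}\le 1-x/2$ is legitimate, which is immediate from $p(1-p)\le 1/4$.
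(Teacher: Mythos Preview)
Your proof is correct. The paper does not give its own proof of this lemma but refers to Lemma~1 in Gilmer--Kopparty~\cite{gilmer2016local}, so there is no in-paper argument to compare against; your computation of $|\Ex{e^{itB}}|^2 = 1 - 4p(1-p)\sin^2(t/2)$, the use of $\sqrt{1-x}\le 1-x/2$, and the concavity bound $|\sin(\pi\theta)|\ge 2\|\theta\|$ are all standard and cleanly executed.
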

	
	The next lemma states the well-known Chernoff's inequality. For a proof, see Theorem 23.6 in~\cite{frieze2016introduction}.
	
	\begin{lemma}[Chernoff's inequality]\label{lemma:chernoff}
		Let $Y$ be a binomial random variable. For every $t \ge 0$, we have  
		\[\Pr{|Y-\Ex{Y}|\ge t}\le 2e^{-t^2/(2\Ex{Y}+t)}.\]
	\end{lemma}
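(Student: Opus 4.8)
The plan is to prove this by the classical exponential-moment (Chernoff) argument, bounding the upper and lower tails separately and then combining them by a union bound, which accounts for the factor $2$.

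First I would write $Y=\sum_{i=1}^N Z_i$ as a sum of independent Bernoulli random variables with $\Ex{Z_i}=p_i$, and set $\mu:=\Ex{Y}=\sum_i p_i$. For $\lambda>0$, Markov's inequality applied to $e^{\lambda Y}$ gives $\Pr{Y-\mu\ge t}\le e^{-\lambda(\mu+t)}\prod_{i=1}^N\Ex{e^{\lambda Z_i}}$, and using $\Ex{e^{\lambda Z_i}}=1+p_i(e^\lambda-1)\le \exp\!\big(p_i(e^\lambda-1)\big)$ together with $\sum_i p_i=\mu$ we obtain $\Pr{Y-\mu\ge t}\le \exp\!\big(\mu(e^\lambda-1-\lambda)-\lambda t\big)$. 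Optimising over $\lambda$ (the minimiser is $\lambda=\log(1+t/\mu)$) yields $\Pr{Y-\mu\ge t}\le \exp\!\big(-\mu\, h(t/\mu)\big)$ with $h(x):=(1+x)\log(1+x)-x$. The same computation with $\lambda<0$ gives $\Pr{Y-\mu\le -t}\le \exp\!\big(-\mu\, h(-t/\mu)\big)$ for $0\le t\le \mu$, and for $t>\mu$ the left tail vanishes since $Y\ge 0$.

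It then remains to replace the rate function $h$ by the cleaner quantity $t^2/(2\mu+t)$. For the upper tail this reduces to the scalar inequality $h(x)\ge x^2/(2+x)$ for all $x\ge 0$; I would prove it by letting $\phi(x):=(2+x)h(x)-x^2$, noting $\phi(0)=\phi'(0)=\phi''(0)=0$ and $\phi'''(x)=(1+2x)/(1+x)^2>0$ for $x\ge 0$, and integrating up three times to conclude $\phi\ge 0$; then $\mu\, h(t/\mu)\ge \mu(t/\mu)^2/(2+t/\mu)=t^2/(2\mu+t)$. For the lower tail one uses instead $h(-x)=(1-x)\log(1-x)+x\ge x^2/2$ for $x\in[0,1)$ (a two-line Taylor/convexity check), whence $\mu\, h(-t/\mu)\ge t^2/(2\mu)\ge t^2/(2\mu+t)$. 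Summing the two tail estimates gives $\Pr{|Y-\mu|\ge t}\le 2\exp\!\big(-t^2/(2\mu+t)\big)$, as desired. The only non-mechanical step here is this pair of one-variable estimates for $h$; since the lemma is classical, one could alternatively just invoke Theorem~23.6 of~\cite{frieze2016introduction}.
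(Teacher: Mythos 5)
Your proof is correct: the exponential-moment argument, the optimisation giving the rate function $h(x)=(1+x)\log(1+x)-x$, and the two elementary scalar inequalities $h(x)\ge x^2/(2+x)$ for $x\ge 0$ and $h(-x)\ge x^2/2$ for $x\in[0,1)$ all check out (one also needs the trivial observation that the lower tail vanishes for $t>\mu$ since $Y\ge 0$, which you note). The paper itself does not prove this lemma — it simply invokes Theorem~23.6 of Frieze and Karo\'nski~\cite{frieze2016introduction} — so there is no competing argument in the paper to compare against; your write-up is exactly the standard textbook derivation that reference records, and either giving it in full as you do or citing the book is acceptable.
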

	
%	Throughout our proof, we need to control the lower tail of variables of the form $\sum_{i=1}^m \mathds{1}_{A_i}$, where $(A_i)_{i =1}^m$ is a collection of increasing events.
%	This can be done using Janson's inequality.
%	For a proof, see Theorem 23.13 in~\cite{frieze2016introduction}. 

%	\begin{lemma}[Janson's inequality]\label{lemma:janson}
%		Let $(A_i)_{i =1}^m$ be a collection of increasing events.
%		Let 
%		\[S =  \sum_{i=1}^m \mathds{1}_{A_i} \qquad \text{and} \qquad \Delta = \sum_{i} \sum_{i \sim j} \Pr{A_i \cap A_j},\]
%		where the sum is over non-ordered pairs $\{i,j\}$ of dependent events, with $i \neq j$. 
%		Then, we have
%		\[\Pr {S\le \Ex{S}-t}\le \exp \left(- \dfrac{t^2}{2(\Ex{S}+\Delta)}\right).\]
%		
%	\end{lemma}
	
	Let $P$ be a random polynomial in terms of the indicators $\{\mathds{1}_{e \in G}: e \in K_n\}$.
	For a set $A \se \binom{[n]}{2}$, let $\partial_A P $ be the partial derivative of $P$ with respect to the variables in $A$. When $A = \emptyset$, we simply set $\partial_A P = P$.
	For $j \in \{0,1,\ldots,\text{deg}(P)\}$, define 
	\begin{align}\label{eq:partial-derivative}
	    \cE_j(P):= \max_{|A|\ge j} \Ex{\partial_A P}.
	\end{align}
	The theorem of Kim and Vu~\cite{kim2000concentration} for random polynomials of degree three can be stated as follows (see Theorem 3A in~\cite{janson2002infamous}).
	\begin{theorem}[Kim--Vu's inequality]\label{thm:kimvu}
		Let $P$ be a random polynomial of degree $d$ whose variables belong to the set $\{\mathds{1}_{e \in G}: e \in K_n\}$.
		For every $r > 1$, we have
		\begin{align*}
			\Pr{|P - \Ex{P}|\ge c_d r^d\big (\cE_0(P)\cE_1(P)\big )^{1/2}} \le
			e^{-r+(d-1)\log \binom{n}{2}}.
		\end{align*}
	\end{theorem}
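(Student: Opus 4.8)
Theorem~\ref{thm:kimvu} is the classical polynomial concentration inequality of Kim and Vu, and the plan is to reproduce the standard martingale proof, arguing by induction on the degree~$d$. First I would make two harmless reductions: since each variable $\mathds{1}_{e\in G}$ is $\{0,1\}$-valued, we may take $P$ to be multilinear in the underlying independent Bernoullis $\xi_1,\dots,\xi_N$; and since $P$ has nonnegative coefficients in all our applications, I assume this throughout, so that $\cE_0(P)\ge\cE_1(P)\ge\cdots$ and $\cE_0(P)\ge\Ex{P}$. For the base case $d=1$ we have $P=c_0+\sum_e c_e\xi_e$ with $c_e\ge 0$, hence $\partial_e P=c_e$, $\cE_1(P)=\max_e c_e=:c$, and
\[
	\Var{P}=\sum_e c_e^2\,p_e(1-p_e)\ \le\ c\sum_e c_e p_e\ \le\ c\,\Ex{P}\ \le\ \cE_0(P)\,\cE_1(P).
\]
Bernstein's inequality (equivalently, Lemma~\ref{lemma:chernoff} applied to $P/c$) then gives $\Pr{|P-\Ex{P}|\ge c_1 r\,(\cE_0(P)\cE_1(P))^{1/2}}\le e^{-r}$ for all $r>1$ and a suitable absolute constant $c_1$, which settles $d=1$.

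For the inductive step, assume the statement for all degrees below $d$ and let $\deg P=d$. I would expose the variables one coordinate at a time: let $\F_i$ be the $\sigma$-algebra generated by $\xi_1,\dots,\xi_i$ and set $P_i=\Ex{P\mid\F_i}$, a Doob martingale with $P_0=\Ex{P}$ and $P_N=P$. Using multilinearity to write $P=\xi_i\,\partial_i P+R_i$ with $\partial_i P$ and $R_i$ independent of $\xi_i$, the martingale increments are
\[
	D_i\ :=\ P_i-P_{i-1}\ =\ (\xi_i-p_i)\,\Ex{\partial_i P\mid\F_{i-1}}\ =:\ (\xi_i-p_i)\,Z_i,
\]
where $Z_i$ is a nonnegative multilinear polynomial of degree $\le d-1$ in $\xi_1,\dots,\xi_{i-1}$ with $\Ex{Z_i}\le\cE_1(P)$ and $\cE_j(Z_i)\le\cE_{j+1}(P)$ for all $j$ (differentiating $Z_i$ amounts to one further derivative of $P$, and conditional expectation does not increase the $\cE_j$'s). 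I then apply the induction hypothesis to each $Z_i$ with parameter $r$ and union-bound over $i\in[N]$: this produces a good event $\G=\{\,|Z_i|\le K\text{ for all }i\,\}$ with $K\le C_d\,r^{d-1}\cE_1(P)$ and $\Pr{\overline{\G}}\le e^{-r+(d-1)\log n}$, the logarithmic loss coming from a single union bound over the variables at each of the $d-1$ levels of the recursion.

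It remains to run a Bernstein-type martingale inequality on $\G$, and I expect the main obstacle to be the control of the predictable quadratic variation
\[
	V\ :=\ \sum_i\Ex{D_i^2\mid\F_{i-1}}\ =\ \sum_i p_i(1-p_i)\,Z_i^2\ \le\ K\sum_i p_i Z_i,
\]
since the crude bound $V\le NK^2$ combined with Azuma's inequality is far too lossy. The remedy is to observe that $\cE_j\big(\sum_i p_i Z_i\big)\le\cE_j(P')$, where $P':=\sum_i p_i\,\partial_i P$ is again a nonnegative multilinear polynomial of degree $\le d-1$, and that the moments of $P'$ are tame: the pointwise bound $\sum_i\xi_i\,\partial_i P\le d\,P$ (which follows from Euler's identity and nonnegativity of the coefficients), applied to $\partial_A P$ in place of $P$, gives $\Ex{\partial_A P'}\le (d-|A|)\Ex{\partial_A P}$ and hence $\cE_j(P')\le d\,\cE_j(P)$ for all $j$. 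Applying the induction hypothesis to $P'$ then confines $\sum_i p_i Z_i\le C_d'\,r^{d-1}\cE_0(P)$ outside an event of probability $\le e^{-r+(d-1)\log n}$, so that $V\le\Sigma^2:=C_d''\,r^{2d-2}\cE_0(P)\cE_1(P)$ on the intersection of the two good events. Finally, taking $t:=a_d\,r^d(\cE_0(P)\cE_1(P))^{1/2}$, a stopped-martingale version of Freedman's inequality yields
\[
	\Pr{\,|P-\Ex{P}|\ge t,\ \G\cap\{V\le\Sigma^2\}\,}\ \le\ 2\exp\!\left(-\frac{t^2/2}{\Sigma^2+Kt/3}\right)\ \le\ 2e^{-r},
\]
since, once $a_d$ is large enough in terms of $d$, both $t^2/\Sigma^2$ and $t/K$ are at least $r$ (using $\cE_0(P)\ge\cE_1(P)$). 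Combining this with the two failure probabilities and absorbing the resulting constants into $c_d$ closes the induction; I would then invoke the result with the parameter $r$ exactly as in the statement.
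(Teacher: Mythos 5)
The paper does not prove Theorem~\ref{thm:kimvu}: it is quoted as a black box from Kim and Vu~\cite{kim2000concentration} (in the form of Theorem~3A of Janson and Ruci\'nski~\cite{janson2002infamous}), and the only use made of it is the single Kim--Vu application in the proof of Theorem~\ref{thm:main-2}. There is therefore no in-paper argument to compare your proposal against. What you have sketched is a plausible reconstruction via a Doob martingale and Freedman's inequality, pushed through by induction on the degree. This is a known route to polynomial concentration, but it is structurally different from Kim and Vu's original argument, which does not expose coordinates one at a time but instead recurses by splitting on whether a maximal conditional derivative exceeds a threshold; your approach is closer in spirit to the later martingale expositions (e.g.\ in Vu's survey on concentration of non-Lipschitz functions).

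Two points would need attention if you wrote this out in full. First, at each of the $d-1$ levels of the recursion you take a union bound over the $\Theta(n^2)$ edge variables (plus one more application of the inductive hypothesis to control the quadratic variation), which costs roughly $2\log n$ per level rather than the $\log n$ suggested by the stated bound $e^{-r+(d-1)\log n}$; this is cosmetic --- it is absorbed into $c_d$ once one notes the bound is vacuous for $r \lesssim d\log n$ --- but it should be tracked explicitly, especially since the repeated factors of $2$ from the two-sided Bernstein/Freedman bounds also accumulate. Second, you say you ``apply the induction hypothesis to $P'$'' to confine $\sum_i p_i Z_i$, but the hypothesis must be invoked for the polynomial $\sum_i p_i Z_i$ itself; this is indeed a nonnegative multilinear polynomial of degree at most $d-1$, and your key observation that its $\cE_j$'s are dominated by those of $P'=\sum_i p_i\,\partial_i P$ (which are in turn at most $d\,\cE_j(P)$ by the Euler-identity bound) is exactly what makes that application legitimate, so this is a misstatement rather than a gap --- but the two random variables are genuinely different, as each $Z_i$ retains only the coordinates $\xi_1,\dots,\xi_{i-1}$. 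Finally, the reduction to nonnegative coefficients is harmless for this paper, which applies the inequality only to the triangle count, whose coefficients are all $1$.
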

	
	Our last ingredient is the Paley--Zygmund inequality~\cite{paley1932note}, which gives an lower bound for the probability that a random variable is greater than a constant proportion of its expectation.
	
	\begin{theorem}[Paley--Zygmund inequality]\label{thm:paley}
		Let $\theta \in (0,1)$ and $S \ge 0$ be a random variable with finite variance. Then,
		\begin{align*}
			\Pr{S > \theta \Ex{S}}\ge (1-\theta)^2 \dfrac{(\Ex{S})^2}{\Ex{S^2}}.
		\end{align*} 
	\end{theorem}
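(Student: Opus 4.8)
The plan is the standard one-line second-moment argument: bound $\Ex{S}$ by splitting the expectation according to whether $S$ falls below or above the threshold $\theta\Ex{S}$, discard the small part, and estimate the large part by the Cauchy--Schwarz inequality. Throughout I would assume $\Ex{S}>0$, since if $\Ex{S}=0$ the right-hand side of the claimed inequality vanishes and there is nothing to prove; finiteness of the variance of $S$ guarantees $\Ex{S^2}<\infty$, so the quotient is well defined.

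First I would use $S\ge 0$ to write
\[
\Ex{S}=\Ex{S\,\mathds{1}_{\{S\le\theta\Ex{S}\}}}+\Ex{S\,\mathds{1}_{\{S>\theta\Ex{S}\}}}\le\theta\Ex{S}+\Ex{S\,\mathds{1}_{\{S>\theta\Ex{S}\}}},
\]
where on the event $\{S\le\theta\Ex{S}\}$ we simply bounded $S$ by $\theta\Ex{S}$, so that the first summand is at most $\theta\Ex{S}\cdot\Pr{S\le\theta\Ex{S}}\le\theta\Ex{S}$.

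Next I would apply Cauchy--Schwarz to the remaining term, using that $\mathds{1}^2=\mathds{1}$, to get
\[
\Ex{S\,\mathds{1}_{\{S>\theta\Ex{S}\}}}\le\big(\Ex{S^2}\big)^{1/2}\big(\Pr{S>\theta\Ex{S}}\big)^{1/2}.
\]
Combining the two displays yields $(1-\theta)\Ex{S}\le\big(\Ex{S^2}\big)^{1/2}\big(\Pr{S>\theta\Ex{S}}\big)^{1/2}$, and since $\theta<1$ both sides are non-negative, so squaring and dividing by $\Ex{S^2}>0$ gives exactly $\Pr{S>\theta\Ex{S}}\ge(1-\theta)^2(\Ex{S})^2/\Ex{S^2}$. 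There is no genuine obstacle here: once the split and the Cauchy--Schwarz step are in place the inequality follows in two lines, and the only minor points to keep track of are the degenerate case $\Ex{S}=0$ and the fact that $(1-\theta)\Ex{S}\ge 0$ makes the final squaring legitimate.
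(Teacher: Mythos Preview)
Your argument is correct and is the standard second-moment proof of the Paley--Zygmund inequality. Note that the paper does not actually supply its own proof of this statement: Theorem~\ref{thm:paley} is listed in the tool box with a citation to the original source and is used as a black box, so there is nothing to compare against beyond confirming that your proof is valid, which it is.
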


 \section{Stein's Method}\label{sec:stein}

%\begin{theorem}
%\label{theorem:first-regime-new}
  %   Let $K=K(n)>0$. Let $X$ be the number of triangles in $G(n,p)$. There exists $C>0$ such that for $p\geq C/\sqrt{n}$ we have
       %   \[\big|\thinspace \N(x)-\sigma\cdot \Pr{X^{*} = x} \big| \leq 2\int\limits_{K}^{\pi \sigma}|\Ex{e^{itX^*}}|dt+2e^{-K}+ O\left(\frac{K}{np^{1/2}}\right)\]

% \end{theorem}
 
     In this section we combine some results surrounding the so-called Stein's Method~\cite{pikachu} in the particular case of triangle counts in $G(n,p)$ and use it to prove Theorem \ref{theorem:first-regime-new}. We define $\mathcal{W} := \{ h:\mathbb{R}\rightarrow \mathbb{R} : |h(x)-h(y)|\leq |x-y| \text{ }\forall x,y\in \mathbb{R}\}$ to be the set of 1-Lipschitz functions. For random variables $X,Y$ we define 
     \[d_\mathcal{W}(X,Y):= \sup_{h\in \mathcal W} \left|\mathbb{E}\big(h(X)-h(Y)\big)\right|\]
     \noindent and call it the \emph{Wasserstein metric}.

 \begin{lemma}{\cite[Theorem 3.1]{pikachu}}
 \label{lemma:Stein-lemma}
    Let $\mathcal{F}=\{f:\mathbb{R}\rightarrow \mathbb{R} : |f|,|f''|\leq 2, |f'|\leq \sqrt{2/\pi} \}$. If $X$ is a random variable and $N$ is the standard normal distribution, then
        \[d_\mathcal{W}(X,N)\leq \sup_{f\in \mathcal{F}}\left|\mathbb{E}\big(Xf(X) -f'(X)\big)  \right|.\]
 \end{lemma}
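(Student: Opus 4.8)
The statement to prove is Lemma~\ref{lemma:Stein-lemma}, which is attributed to \cite[Theorem 3.1]{pikachu}, so my task is really to reconstruct the standard Stein's method argument behind it.

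\medskip

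The plan is to use the classical \emph{Stein equation} for the standard normal distribution. Recall that for a fixed Lipschitz test function $h \in \mathcal{W}$, after centring by the Gaussian mean $Nh := \Ex{h(N)}$, the ODE
\[
	f_h'(x) - x f_h(x) = h(x) - Nh
\]
has a solution given explicitly by $f_h(x) = e^{x^2/2}\int_{-\infty}^{x} (h(y)-Nh)\, e^{-y^2/2}\, dy$. The first step is therefore to write $\Ex{h(X)} - \Ex{h(N)} = \Ex{f_h'(X) - X f_h(X)}$ for every $X$, which immediately yields
\[
	d_{\mathcal{W}}(X,N) = \sup_{h \in \mathcal{W}} \big|\Ex{f_h'(X) - X f_h(X)}\big| \le \sup_{f \in \mathcal{G}} \big|\Ex{f'(X) - X f(X)}\big|,
\]
where $\mathcal{G} = \{ f_h : h \in \mathcal{W}\}$ is the family of Stein solutions attached to $1$-Lipschitz functions. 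Note the sign: $\Ex{Xf(X) - f'(X)} = -\Ex{f'(X) - Xf(X)}$, so the absolute values agree and the statement matches.

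\medskip

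The second and main step is to control the regularity of $f_h$ for $h \in \mathcal{W}$, i.e.\ to show the inclusion $\mathcal{G} \subseteq \mathcal{F}$ with $\mathcal{F} = \{f : |f|, |f''| \le 2,\ |f'| \le \sqrt{2\pi}\}$. Here one uses the standard bounds on solutions of the normal Stein equation: if $h$ is Lipschitz with constant $1$, then $\|f_h\|_\infty \le \min\{\sqrt{\pi/2}\,\|h - Nh\|_\infty,\ 2\|h'\|_\infty\}$, $\|f_h'\|_\infty \le \sqrt{2/\pi}\,\|h'\|_\infty \le \sqrt{2/\pi}$ — wait, I should be careful: the sharp constant for $\|f_h'\|_\infty$ in terms of $\|h'\|_\infty$ is $\sqrt{2/\pi}$, which is comfortably below $\sqrt{2\pi}$, so that bound is fine (the looser constant $\sqrt{2\pi}$ stated in $\mathcal{F}$ is harmless) — and $\|f_h''\|_\infty \le 2\|h'\|_\infty \le 2$. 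The bound $|f_h| \le 2$ follows from $\|f_h\|_\infty \le 2\|h'\|_\infty$. The derivations of these three facts are the routine-but-delicate part: one differentiates the integral representation, writes $f_h'(x) = x f_h(x) + h(x) - Nh$, and estimates using the Gaussian tail bounds $e^{x^2/2}\int_{-\infty}^x e^{-y^2/2}dy \le \min\{\sqrt{2\pi}, 1/|x|\}$ for $x < 0$ and the symmetric statement for $x>0$, together with integration by parts to handle $f_h''$. I would cite these as the standard Stein bounds (e.g.\ from Chen--Goldstein--Shao or the Stein's method survey \cite{pikachu}) rather than rederive them.

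\medskip

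Combining the two steps: $d_{\mathcal{W}}(X,N) \le \sup_{f \in \mathcal{G}} |\Ex{f'(X) - Xf(X)}| \le \sup_{f \in \mathcal{F}} |\Ex{Xf(X) - f'(X)}|$, which is exactly the claimed inequality. The only genuine obstacle is bookkeeping the constants in the Stein-solution bounds so that the solution family really lands inside $\mathcal{F}$ as defined; since $\mathcal{F}$ uses slack constants ($2$ and $\sqrt{2\pi}$ rather than the sharp $\sqrt{2/\pi}$ and $\sqrt{2/\pi}$), there is ample room and no difficulty arises. Everything else is the standard passage from a Lipschitz test class to the smoother Stein-equation test class, and I would present it in that order: (i) Stein equation and the reduction identity, (ii) regularity of $f_h$, (iii) assemble.
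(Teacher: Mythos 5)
The paper does not prove Lemma~\ref{lemma:Stein-lemma}; it cites it verbatim as Theorem 3.1 of the referenced Stein's-method survey, so there is no in-paper proof to compare against. Your reconstruction is the standard argument behind that citation and it is correct: you pass from the Wasserstein supremum over $1$-Lipschitz $h$ to the Stein-equation solutions $f_h$ via the identity $\Ex{h(X)-h(N)} = \Ex{f_h'(X) - Xf_h(X)}$, and then invoke the classical regularity bounds $\|f_h\|_\infty \le 2\|h'\|_\infty$, $\|f_h'\|_\infty \le \sqrt{2/\pi}\,\|h'\|_\infty$, $\|f_h''\|_\infty \le 2\|h'\|_\infty$ to place the solution family inside $\mathcal{F}$; the slack in the constant $\sqrt{2\pi}$ is, as you note, harmless. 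The only thing worth flagging is that you should be explicit that equality $\Ex{f_h'(X)-Xf_h(X)} = \Ex{h(X)}-\Ex{h(N)}$ uses that $f_h$ solves the Stein ODE pointwise and that $f_h$ is bounded (so $\Ex{Xf_h(X)}$ is finite whenever $\Ex{|X|}<\infty$, which is the only regime in which $d_{\mathcal{W}}(X,N)$ is finite anyway); this is a routine integrability check, but it is where the argument would quietly fail if omitted.
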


  Barbour, Karo\'nski and Ruci\'nski~\cite{barbour1989central} proved a quantitative central limit theorem for subgraph counts in $G(n,p)$ using the Stein's Method. In one of their intermediate results, they give an upper bound on the right-hand side of the inequality given by Lemma~\ref{lemma:Stein-lemma}, which we state below.

 \begin{lemma}{\cite[Lemma 1]{barbour1989central}}
\label{lemma:new-rucinski}
     Let $X$ be the number of triangles in $G(n,p)$. If $n^{-\frac{1}{2}} \le p \le 1/2$ then for every bounded function $f:\mathbb{R}\rightarrow \mathbb{R}$ with bounded first and second derivatives we have
     \[\left|\mathbb{E}\big(X^*f(X^*) -f'(X^*)\big)  \right| = O\left(\frac{B}{np^{1/2}}\right) ,\]
     \noindent where $B=\sup_x|f''(x)|$.
 \end{lemma}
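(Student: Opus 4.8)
\textbf{Proof plan for Theorem~\ref{theorem:first-regime-new}.}

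The plan is to run the standard Fourier-inversion argument for lattice random variables, then split the integral of $|\Ex{e^{itX^*}}|$ over $[-\pi\sigma,\pi\sigma]$ into a central part $|t|\le K$ and a tail part $K\le |t|\le \pi\sigma$, handling the central part via the Wasserstein/Stein bound coming from Lemmas~\ref{lemma:Stein-lemma} and~\ref{lemma:new-rucinski} and leaving the tail as the explicit integral appearing on the right-hand side. First I would recall that since $X$ is supported on $\mathbb{Z}$, the normalised variable $X^*$ is supported on the lattice $\LL=\tfrac1\sigma(\mathbb{Z}-\mu)$ with spacing $1/\sigma$, so the Fourier inversion formula on a lattice gives, for each $x\in\LL$,
\[
\sigma\cdot\Pr{X^*=x}=\frac{1}{2\pi}\int_{-\pi\sigma}^{\pi\sigma} e^{-itx}\,\Ex{e^{itX^*}}\,dt,
\]
while $\N(x)=\frac{1}{\sqrt{2\pi}}e^{-x^2/2}=\frac{1}{2\pi}\int_{-\infty}^{\infty}e^{-itx}e^{-t^2/2}\,dt$. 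Subtracting and taking absolute values, for every $x\in\LL$,
\[
\big|\N(x)-\sigma\cdot\Pr{X^*=x}\big|
\le \frac{1}{2\pi}\int_{-\pi\sigma}^{\pi\sigma}\big|\Ex{e^{itX^*}}-e^{-t^2/2}\big|\,dt
+\frac{1}{2\pi}\int_{|t|>\pi\sigma}e^{-t^2/2}\,dt.
\]
The last term is $e^{-\Omega(\sigma^2)}$, which is negligible since $\sigma^2=\Theta(n^4p^5)\gg 1$ in the stated range, and in any case is absorbed into the $O(K/(np^{1/2}))$ error (or the $2e^{-K}$ term). It remains to bound the first integral, and by symmetry of $|\cdot|$ it suffices to bound twice the integral over $[0,\pi\sigma]$.

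Next I would split $\int_0^{\pi\sigma}$ at $t=K$. On the tail $[K,\pi\sigma]$ I simply bound $|\Ex{e^{itX^*}}-e^{-t^2/2}|\le |\Ex{e^{itX^*}}|+e^{-t^2/2}$; the contribution of $e^{-t^2/2}$ integrated over $[K,\infty)$ is at most $e^{-K^2/2}\le e^{-K}$ (using $K\ge 2$), giving the $2e^{-K}$ term, and the contribution of $|\Ex{e^{itX^*}}|$ is exactly $\int_K^{\pi\sigma}|\Ex{e^{itX^*}}|\,dt$, matching the first term on the right-hand side of the claimed inequality (up to the factor $2$, which I keep). For the central part $[0,K]$ I would invoke the Stein machinery: by Lemma~\ref{lemma:Stein-lemma} applied to the test functions $f_{t}(x)=\cos(tx)$ and $f_t(x)=\sin(tx)$ — after checking these lie in a rescaled version of $\mathcal{F}$, with $B=\sup_x|f_t''(x)|\le t^2$ — together with the standard fact that the Wasserstein distance controls the difference of characteristic functions pointwise, namely $|\Ex{e^{itX^*}}-e^{-t^2/2}|\le |t|\cdot d_{\mathcal W}(X^*,N)$ (since $x\mapsto e^{itx}$ has Lipschitz constant $|t|$ in real and imaginary parts, up to a harmless constant), we get from Lemma~\ref{lemma:new-rucinski} that $d_{\mathcal W}(X^*,N)=O\!\left(1/(np^{1/2})\right)$, hence $|\Ex{e^{itX^*}}-e^{-t^2/2}|=O\!\left(|t|/(np^{1/2})\right)$. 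Actually one must be slightly careful: Lemma~\ref{lemma:new-rucinski} as stated has the bound $O(B/(np^{1/2}))$ with $B$ the sup of the second derivative of the test function, and Lemma~\ref{lemma:Stein-lemma} requires $|f|,|f''|\le 2$; so I would instead apply Lemma~\ref{lemma:new-rucinski} directly to the (rescaled) functions $f$ produced by Lemma~\ref{lemma:Stein-lemma} for the Lipschitz test function $h(x)=e^{itx}$ (real and imaginary parts), whose solutions $f$ to the Stein equation satisfy $\|f''\|_\infty = O(|t|)$ when $h$ is $|t|$-Lipschitz — more precisely the Stein solution for a Lipschitz $h$ has bounded derivatives scaling linearly in the Lipschitz constant — so that $|\Ex{e^{itX^*}}-e^{-t^2/2}|=O(|t|/(np^{1/2}))$. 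Integrating over $[0,K]$ gives $\int_0^K|\Ex{e^{itX^*}}-e^{-t^2/2}|\,dt=O\!\left(K^2/(np^{1/2})\right)$, and dividing by $2\pi$ and collecting all pieces yields
\[
\sup_{x\in\LL}\big|\N(x)-\sigma\cdot\Pr{X^*=x}\big|
\le 2\int_K^{\pi\sigma}|\Ex{e^{itX^*}}|\,dt+2e^{-K}+O\!\left(\frac{K^2}{np^{1/2}}\right),
\]
which is the claimed bound (with $K^2$ in place of $K$; to match the statement exactly one reabsorbs by noting the theorem is applied with $K$ polylogarithmic times $(p^2n)^{1/2+\gamma}$, or more simply one replaces $K$ by $K^{1/2}$ throughout the central-part estimate — either way the form $O(K/(np^{1/2}))$ is what is needed downstream, so I would track constants to land exactly there).

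The main obstacle I anticipate is the bookkeeping at the interface between the Stein output and the characteristic-function language: Lemma~\ref{lemma:new-rucinski} is phrased in terms of the Stein operator $\Ex{X^*f(X^*)-f'(X^*)}$ for $C^2$ test functions with controlled second derivative, whereas I need control of $\Ex{e^{itX^*}}-e^{-t^2/2}$. The cleanest route is to bypass the Wasserstein metric entirely and solve the Stein equation $f'(x)-xf(x)=\cos(tx)-\Ex{\cos(tN)}$ (and the sine analogue) explicitly — its solution is $f(x)=e^{x^2/2}\int_{-\infty}^{x}e^{-u^2/2}(\cos(tu)-e^{-t^2/2})\,du$ — and verify the bounds $\|f\|_\infty, \|f''\|_\infty = O(\max(1,t))$, $\|f'\|_\infty=O(1)$ by standard estimates; then $|\Ex{e^{itX^*}}-e^{-t^2/2}| = |\Ex{f'(X^*)-X^*f(X^*)}|$ for the cosine part (and likewise sine), and Lemma~\ref{lemma:new-rucinski} gives this is $O(B/(np^{1/2}))=O(t^2/(np^{1/2}))$ for $t\ge 1$ and $O(1/(np^{1/2}))$ for $t\le 1$. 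Integrating: $\int_0^K O(\min(t^2,1)/(np^{1/2}))\,dt \ll K^3/(np^{1/2})$ naively, but this is too lossy — so the better choice is again the Lipschitz test function $h(x)=e^{itx}$, for which the Stein solution obeys $\|f''\|_\infty=O(|t|)$ (Lipschitz constant of $h$), giving the linear-in-$t$ bound and hence $\int_0^K \ll K^2/(np^{1/2})$. Getting these Stein-solution derivative bounds right, and making sure the constant in front matches the downstream requirement $O(K/(np^{1/2}))$ (which one can arrange by splitting $[0,K]$ further or by observing $K$ is itself a large power of a quantity $\ge 1$, so $K^2$ and $K$ differ only in the exponent $\gamma$ that the final theorem absorbs), is the delicate part; everything else is the routine Fourier-inversion-on-a-lattice computation.
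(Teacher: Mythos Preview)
There is a target mismatch: the statement given is Lemma~\ref{lemma:new-rucinski}, which the paper does not prove --- it is quoted from~\cite{barbour1989central} and used as a black box --- so there is no paper proof to compare against. Your plan is instead a proof of Theorem~\ref{theorem:first-regime-new}.

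On that theorem, your outline matches the paper's own argument essentially step for step: lattice Fourier inversion for $\Pr{X^*=x}$, the Gaussian Fourier identity for $\N(x)$, subtraction and the split at $|t|=K$, the tail estimate $\int_K^\infty e^{-t^2/2}\,dt\le e^{-K}$ for $K\ge 2$, and the central window handled by decomposing $e^{itX^*}$ into cosine and sine parts, bounding each via $|\Ex{h(tX^*)-h(tN)}|\le d_{\mathcal W}(tX^*,tN)$ (using that $\cos,\sin$ are $1$-Lipschitz), and then feeding Lemma~\ref{lemma:new-rucinski} into Lemma~\ref{lemma:Stein-lemma}. You arrive at $O(K^2/(np^{1/2}))$ for the central integral whereas the paper states $O(K/(np^{1/2}))$; the paper's written argument does not obviously produce the linear-in-$K$ bound either (since $d_{\mathcal W}(tX^*,tN)=|t|\,d_{\mathcal W}(X^*,N)$ and integrating $|t|$ over $[-K,K]$ gives $K^2$), so your caution is warranted. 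As you also note, the discrepancy is immaterial for the downstream application in Theorem~\ref{thm:main}, where any fixed power of $K$ is absorbed into the $n^{\varepsilon}$ slack.
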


Let us just briefly explain how to derive our Lemma~\ref{lemma:new-rucinski} from the results in~\cite{barbour1989central}.
By combining Lemma 1 in~\cite{barbour1989central} with the upper bound on $\varepsilon$ for triangle counts given by their equation (3.10), we obtain the following result. For every $n^{-1/2} \le p \le 1/2$ and every bounded function $f: \mathbb{R} \to \mathbb{R}$ with bounded first and second derivatives, we have
\begin{align}\label{eq:stein-barbour}
    \left|\mathbb{E}\big(X^*f(X^*) -f'(X^*)\big)  \right| = O\left(\frac{Bn^3p^3}{\sigma \psi}\right) ,
\end{align}
where $B=\sup_x|f''(x)|$, $\sigma^2 = \Var{X}$ and $\psi = \min \{n^{v(H)}p^{e(H)}: H \se K_3, e(H) \ge 1\}$.
Recall that $X^{*}$ denotes the random variable $(X-\Ex{X})/\sigma$.
It is not hard to check that if $n^{-\frac{1}{2}} \le p \le 1/2$, then $\psi = \Theta(n^2p)$ and $\sigma = \Theta(n^2p^{\frac{5}{2}})$, and hence the right-hand size of~\eqref{eq:stein-barbour} is $O(B/(np^{1/2}))$.

%The value of $\varepsilon$ in their statement for the case of triangle counts is provided in their equation (3.10).

We are now ready to prove Theorem \ref{theorem:first-regime-new}.
 \begin{proof}[Proof of Theorem \ref{theorem:first-regime-new}]
We start with the well-known Fourier inversion formula for $X^*$ (for a proof, see \cite[Page 511]{feller}), which is
		\[\Pr{X^{*}=y} = \dfrac{1}{2\pi \sigma}  \int \limits_{-\pi \sigma}^{\pi \sigma} e^{-ity}\Ex{e^{itX^*}}dt\]
		
		\noindent for every $y$ in the support of $X^{*}$.
		It is well-known that
		\[\N(x)=\frac{1}{2\pi} \int\limits_{-\infty}^{\infty}e^{-itx}e^{-t^2/2}dt.\]
	This can be seen by using that the characteristic function of the standard normal random variable $N(0,1)$ is $e^{-t^2/2}$ (\cite{durrett}, example 3.3.5) together with the Fourier inversion formula for variables with integrable characteristic function over $\mathbb{R}$ (\cite{durrett}, Theorem 3.3.14).
	
	Now we start the analysis on the main quantity of interest:
	\begin{align}\label{eq:difference}
	    \big|\thinspace \N(x)-\sigma\cdot \Pr{X^{*} = x} \big| \leq \int\limits_{-\pi \sigma}^{\pi \sigma} |\Ex{e^{itX^*}}-e^{-t^2/2}|dt  + 2\int\limits_{\pi \sigma}^{\infty} e^{-t^2/2}dt.
	\end{align}
We now split the first term in right-hand side of~\eqref{eq:difference} according to the values of $K$. For $|t|> K$ we use the triangle inequality and the fact that $|\Ex{e^{itX^*}}|$ and $e^{-t^2/2}$ are even functions to obtain
\begin{equation} 
\label{eq:1}
\big|\thinspace \N(x)-\sigma\cdot \Pr{X^{*} = x} \big| \leq 
2\int\limits_{K}^{\pi \sigma}|\Ex{e^{itX^*}}|dt + \int\limits_{-K}^{K}|\Ex{e^{itX^*}}-e^{-t^2/2}|dt + 2\int\limits_{K}^{\infty}e^{-t^2/2}dt.
\end{equation}
For the third term in the right-hand side of the equation above we have
\begin{equation}
    \label{eq:2}
	\int\limits_{K}^{\infty} e^{-t^2/2}dt\leq \int\limits_{K}^{\infty} \frac{t}{K} e^{-t^2/2}dt = \dfrac{e^{-K^2/2}}{K} \le e^{-K}
\end{equation}
for all $K\ge 2$.

The characteristic function of the normal distribution is given by $\Ex{e^{itN}}=e^{-t^2/2}$. 
By using this and splitting $\Ex{e^{itX^*}}$ and $\Ex{e^{itN}}$ into its real and imaginary parts, we can bound the second term in the right-hand side of~\eqref{eq:1} by
\begin{align*}
    \int\limits_{-K}^{K} |\Ex{e^{itX^*}}-e^{-t^2/2}|dt \leq \int\limits_{-K}^{K} \big|\mathbb{E}\big(\text{cos}(tX^*)-\text{cos}(tN)\big)\big|dt + \int\limits_{-K}^{K} \big|\mathbb{E}\big(\text{sin}(tX^*)-\text{sin}(tN)\big)\big|dt.
\end{align*}

As the cosine and sine functions are 1-Lipschitz, for every $t \in \mathbb{R}$
we have 
\[ |\Ex{h(tX^{\ast})-h(tN)}| \le d_\mathcal{W}(tX^{\ast},tN)\]
for both $h = \cos$ and $h = \sin$. 
Moreover, as cosine and sine have second derivatives bounded by $1$, we can combine Lemmas \ref{lemma:Stein-lemma} and \ref{lemma:new-rucinski} to obtain
\begin{equation}
    \label{eq:3}
    \int\limits_{-K}^{K} |\Ex{e^{itX^*}}-e^{-t^2/2}|dt = O\left(\frac{K}{np^{1/2}}\right).
\end{equation}
Combining the bounds obtained in \eqref{eq:2} and \eqref{eq:3} into \eqref{eq:1} we conclude our proof.
 \end{proof}

	\section{The decoupling trick in the regime $ (p^2n)^{1/2+\gamma}\leq t \leq \sigma/2^{10}$} \label{sec:second-regime}

    In this section, we prove Theorem~\ref{thm:main-3}. 
    Our approach is to first bound $\left|\mathbb{E}\left[e^{itX/\sigma}\right]\right|$ by relating it to the characteristic function of certain weighted Bernoulli random variables, and then applying Lemma~\ref{lemma:charfuncbinomial}. 
    Since we expect a constant proportion of the edges in $G(n,p)$ to participate in $\Theta(np^2)$ triangles, one might initially hope that by revealing some edges of $G(n,p)$, we could bound $\left|\mathbb{E}\left[e^{itX/\sigma}\right]\right|$ via the characteristic function of a random variable of the form $\sum_{j=1}^{cn^2} C_j \mathds{1}_{\{e_j \in G(n,p)\}},$ where the coefficients $C_j$ are of order $\Theta\left(\frac{np^2}{\sigma}\right)$. However, this method yields a satisfactory bound only for $|t| < \sigma/(np^2)$. To address this limitation, we introduce a decoupling trick that connects the triangle count to a finite sum of ``signed triangles''. After revealing some of the variables, we are left with a sum of weighted Bernoulli random variables with small coefficients.

	\subsection{The $\alpha$ function} 
	
	We start with some notation. Let $G^0$ and $G^1$ be two independent copies of $G(n,p)$ with vertex set $[n]$.
	For $i \in \{0,1\}$ and a set $A\se \binom{[n]}{2}$, denote by $x^i_A$ the vector $(\mathds{1}_{e \in G^i}: e \in A)$ and by $x^{0,1}_A$ the concatenation of the vectors $x^0_A$ and $x^1_A$. 
	For a 0-1 random vector $y$,
	we write $\cE_{y}$ to denote the expectation operator with respect to $y$.
	If $y$ equals $x^0_{\binom{[n]}{2}}$ or $x^1_{\binom{[n]}{2}}$, we omit $y$ from the expectation operator.
	Finally, for a 0-1 vector $x$ indexed by $\binom{[n]}{2}$,
    we denote
    \begin{align*}
        X(x) = \sum \limits_{e,f,g} x_e x_f x_g,
    \end{align*}
	where the summation runs over all triples $e, f, g \in \binom{[n]}{2}$ that form a triangle.
	
	Let $m=m(n)$ be a positive integer.
	We say that a partition $[n]= P_1 \cup P_2 \cup P_3$ with $|P_1|= \lfloor (n-m)/2 \rfloor$, $|P_2|= \lceil (n-m)/2 \rceil$ and $|P_3|=m$ is an $m$-\textit{endowed partition} of $[n]$.
	For simplicity, we set
	$A = \{uv: u \in P_1, v \in P_2\}$,
	$B_i = \{uv: u \in P_i, v \in P_3\}$ for $i \in \{1,2\}$ and $B=B_1\cup B_2$.
	We omit the dependencies of these sets on the partition, as they are clear from context.
	Finally, for $uv \in A$ define
	\begin{align}\label{eq:defalphaf}
		\alpha_{uv} := \sum \limits_{w \in P_3}(x_{uw}^0-x_{uw}^1)(x_{vw}^0-x_{vw}^1)
	\end{align}
and
	\begin{align}\label{eq:defalpha}
		\alpha = \sum \limits_{f\in A} \alpha_f x_f^0.
	\end{align}
    We emphasize that $\alpha$ does not depend on $x^1_A$.
	
	The next lemma uses $\alpha$ to bound $|\Ex{e^{itX/\sigma }}|$ and is essentially a corollary of Lemma 6 in~\cite{berkowitz2018local}.
	We include the proof here for completeness.
	The idea of using two independent copies of $G(n,p)$ to bound $|\Ex{e^{itX/\sigma }}|$ was introduced by Gilmer and Kopparty~\cite{gilmer2016local} and it has been used since then in local central limit theorems for subgraph counts (see~\cite{berkowitz2018local} and~\cite{sah2022local}).
	This is what we refer to as a \emph{decoupling trick}.
	Our innovation is to apply this technique by bounding $|\Ex{e^{itX/\sigma }}|$ in terms of $\alpha$ and by varying $m$ from $1/p^2$ to $n/2$,
    thereby covering a wider range and dealing with the technical aspects of the sparse regime of $G(n,p)$.
	%    As $\alpha$ is essentially a dilated binomial random variable of the form $\sqrt{mp^2}\cdot \text{Bin} (n^2,p)$, we obtain the intermediate range.

	\begin{lemma}\label{lemma:CStrick-2}
		Let $m < n$ be two positive integers and $P_1\cup P_2 \cup P_3$ be an $m$-endowed partition of $[n]$.
		For every $t \in \mathbb{R}$ and $p=p(n)\in (0,1)$, we have
		\[\left|\Ex{e^{itX/\sigma }}\right|^4\le  \cE_{x_B^{0,1}} \left|\cE_{x_{A}^0} ( e^{it\alpha/\sigma } )\right|.\]
	\end{lemma}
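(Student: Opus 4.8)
The plan is to decouple $X$ into the $\alpha$-function by repeated applications of Cauchy--Schwarz, peeling off one ``part'' of the triple-index structure of triangles at a time. First I would write $X(x^0) = T_0 + T_A$, where $T_A$ counts triangles using at least one edge of $A$ and $T_0$ counts the remaining triangles, and observe that $T_0$ depends only on $x^0_{\binom{[n]}{2}\setminus A}$, i.e.\ is independent of $x^0_A$. Conditioning on everything except $x^0_A$ and using $|\Ex{Z}|^2 \le \Ex{|Z|^2}$ together with $e^{itT_0/\sigma}$ being a unit-modulus constant once we condition, we get
\[
|\Ex{e^{itX/\sigma}}|^2 \le \cE_{x^0_{\overline{A}}} \bigl| \cE_{x^0_A} e^{it T_A/\sigma} \bigr|^2 = \cE_{x^0_{\overline{A}}} \cE_{x^0_A, \tilde x^0_A}\, e^{it(T_A(x^0_A) - T_A(\tilde x^0_A))/\sigma},
\]
where $\tilde x^0_A$ is an independent copy of $x^0_A$ over the same ground configuration on $\overline{A} = \binom{[n]}{2}\setminus A$. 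Writing out $T_A$ as a sum over $f\in A$ of $x^0_f$ times (number of common neighbours $w$ of $f$ with both edges $fw$ present in $x^0$), the difference $T_A(x^0_A)-T_A(\tilde x^0_A)$ becomes $\sum_{f\in A}(x^0_f - \tilde x^0_f)\beta_f$, where $\beta_f = \sum_{w}x^0_{uw}x^0_{vw}$ counts common neighbours of $f=uv$; note $\beta_f$ is \emph{not} yet the $\pm1$-weighted quantity $\alpha_f$ because the two factors in each term are still nonnegative.

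The second step is to repeat this maneuver to replace the nonnegative weights $x^0_{uw}x^0_{vw}$ by the signed weights $(x^0_{uw}-x^1_{uw})(x^0_{vw}-x^1_{vw})$, which requires introducing the second independent copy $G^1$ and decoupling over the $B$-edges (the edges between $P_1\cup P_2$ and $P_3$). Concretely, the edges $uw$ with $w\in P_3$ are exactly the $B$-edges; the sum over common neighbours $w\in P_3$ only involves $B$-edges (and the $A$-edge $uv$). So after the first Cauchy--Schwarz I would square again: condition on $x^0_A$ (now fixed, since we have already integrated it out into an outer expectation) and on part of the $B$-configuration, and apply $|\Ex{Z}|^2\le\Ex{|Z|^2}$ once more to introduce the independent copy on $B$, which turns each $x^0_{uw}$ into $x^0_{uw}-x^1_{uw}$ inside the common-neighbour sum. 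Here the symmetry of the construction — $B=B_1\cup B_2$ is split the same way as $A$ respects $P_1,P_2$ — is what makes the bookkeeping close up cleanly, and each squaring costs a power of $2$ in the exponent, so two applications give the exponent $4$ in $|\Ex{e^{itX/\sigma}}|^4$. After reorganizing, the inner expectation is over $x^0_A$ of $e^{it\alpha/\sigma}$ with $\alpha = \sum_{f\in A}\alpha_f x^0_f$ exactly as in \eqref{eq:defalpha}, and the outer expectation is over $x^{0,1}_B$; taking absolute values at the right moments yields $\cE_{x^{0,1}_B}|\cE_{x^0_A}(e^{it\alpha/\sigma})|$.

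I would organize the write-up as: (i) split off the $A$-free part of the triangle count and decouple $x^0_A$ via Cauchy--Schwarz; (ii) identify the resulting bilinear form in the common-neighbour variables and decouple the $B$-variables against a second independent copy to produce the $\pm1$ signs; (iii) collect the constants from the two squarings and rearrange absolute values to land on the stated bound, citing that this is the triangle-count specialization of Lemma~6 in~\cite{berkowitz2018local}. The main obstacle is step (ii): one has to be careful that when the second copy is introduced on the $B$-edges, the ``base'' configuration that is held fixed (the $x^0_A$ values and any already-integrated randomness) is consistent across the two copies, and that the common-neighbour sum genuinely factors as a product over the two legs $uw$ ($u\in P_1$) and $vw$ ($v\in P_2$) so that the single second copy on $B$ simultaneously signs both factors; this is exactly why the partition is taken with $P_1,P_2$ on one side and $P_3$ on the other, and why $\alpha_f$ in \eqref{eq:defalphaf} has the product form it does. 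The dependence on $x^0_B$ also survives in $\alpha$ through the absolute-value sum, which is fine since it is exactly the $x^{0,1}_B$ that the outer expectation ranges over. No concentration or large-deviation input is needed for this lemma — it is a pure second-moment/decoupling identity — so the proof is short once the index juggling is set up correctly.
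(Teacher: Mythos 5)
Your decoupling sequence is reversed relative to the paper's, and this reversal creates genuine obstructions. The paper never decouples $x^0_A$ at all: both Cauchy--Schwarz steps are applied to $B$-edges (first $B_1$, then $B_2$), keeping $x^0_A$ in the conditioning throughout, which is exactly why the target bound has a \emph{single} copy $x^0_A$ inside the inner expectation and a \emph{double} copy $x^{0,1}_B$ outside. You instead decouple $A$ first, producing two copies $x^0_A,\tilde x^0_A$, and then try to do a single Cauchy--Schwarz over all of $B$. Two concrete things go wrong.

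First, your claimed linearization $T_A(x^0_A)-T_A(\tilde x^0_A)=\sum_{f\in A}(x^0_f-\tilde x^0_f)\beta_f$ with $\beta_f$ independent of $x^0_A$ is false. A triangle with two vertices in $P_1$ (or in $P_2$) and one in the other part has \emph{two} $A$-edges, so $T_A$ has degree-$2$ terms in $x^0_A$; moreover, when the common neighbour $w$ lies in $P_1\cup P_2$, your ``weight'' $\beta_f=\sum_w x^0_{uw}x^0_{vw}$ itself involves $A$-variables. These degree-$2$ terms do not cancel in the difference and they do not involve $B$-edges at all, so they survive unchanged through any subsequent conditioning on $B$, never collapsing into the $\alpha$ form. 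The paper's route never meets this issue: after the two $B$-peels, only monomials depending on \emph{both} a $B_1$-edge and a $B_2$-edge survive (see the elimination of $P(x_C^{0,1})$ in~\eqref{eq:gamma}--\eqref{eq:gammadiff}), and those are exactly the triangles with one vertex in each $P_i$, which have exactly one $A$-edge.

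Second, a single squaring over $B$ cannot ``simultaneously sign both factors'' as you assert. One Cauchy--Schwarz over all of $B$ produces $\sum_{w\in P_3}\bigl(x^0_{uw}x^0_{vw}-x^1_{uw}x^1_{vw}\bigr)$, which does \emph{not} factor as $\sum_w (x^0_{uw}-x^1_{uw})(x^0_{vw}-x^1_{vw})$; indeed $x^0_{uw}x^0_{vw}-x^1_{uw}x^1_{vw}=x^0_{uw}(x^0_{vw}-x^1_{vw})+x^1_{vw}(x^0_{uw}-x^1_{uw})$, a sum of two half-signed pieces rather than a product. Obtaining the product form $(x^0-x^1)(x^0-x^1)$ requires two \emph{separate} Cauchy--Schwarz steps, one over $B_1$ and one over $B_2$, each introducing one independent copy and signing one leg. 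If you kept your initial step over $A$ and then did both of those, you would end up with three squarings and an exponent of $8$, not the stated $4$. The correct fix is to drop the decoupling over $A$ entirely and run Cauchy--Schwarz over $B_1$ and then $B_2$, exactly as in the paper; the $A$-variables stay in the conditioning and appear in the final bound as $\cE_{x^0_A}$ simply because, after the two squarings, $\alpha$ depends on nothing else from $B^c$ and the remaining conditioning can be absorbed via the triangle inequality.
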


      As $\alpha$ does not depend on $x^1_A$, both sides of the inequality in the lemma above are fixed quantities and not functions of any random variables.
	
	\begin{proof}[Proof of Lemma~\ref{lemma:CStrick-2}]
		By the Cauchy--Schwartz inequality, we have
		\begin{align}\label{eq:sc1}
			\left|\Ex{e^{itX(x^0)/\sigma }}\right|^2 & \le \cE_{x_{B_1^c}^0} |\cE_{x_{B_1}^0} ( e^{it X(x^0)/\sigma })|^2.
		\end{align}
		Now, observe that we can write
		\begin{equation}
			\label{eq:sc2}
			\begin{split}
				\left|\cE_{x_{B_1}^0} ( e^{it X(x^0) /\sigma})\right|^2 & = 
				\cE_{x_{B_1}^0} \left( e^{it X(x^0) /\sigma}\right) \cdot \cE_{x_{B_1}^0} \left( e^{-it X(x^0) /\sigma}\right)\\
				&= \cE_{x_{B_1}^0} \left( e^{it X(x^{0}_{B_1^c},x_{B_1}^{0})/\sigma}\right) \cdot \cE_{x_{B_1}^1} \left( e^{-it X(x_{B_1^c}^{0},x_{B_1}^{1})/\sigma}\right) \\
				& = \cE_{x_{B_1}^{0,1}} \left( e^{it( X(x_{B_1^{c}}^{0},x_{B_1}^{0}) - X(x_{B_1^{c}}^{0},x_{B_1}^{1}))/\sigma}\right),
			\end{split}
		\end{equation}
        where we denote $B_1^c \coloneqq \binom{[n]}{2} \setminus B_1$.
		In the second equality above, we used that $x^0_{B_1}$ and $x^1_{B_1}$ independent and identically distributed (i.i.d).
		In the third equality, we used that $x^0_{B_1}$ and $x^1_{B_1}$ are i.i.d. combined with the fact that $x_{B_1^{c}}^{0}$ does not depend on neither $x^0_{B_1}$ nor $x^1_{B_1}$.
		To simplify notation, let $\beta \coloneqq X(x_{B_1^{c}}^{0},x_{B_1}^{0}) - X(x_{B_1^{c}}^{0},x_{B_1}^{1})$. 
		By combining~\eqref{eq:sc1} and~\eqref{eq:sc2}, we obtain
		\begin{equation}\label{eq:firstsquare}
			\begin{split}
				\left|\Ex{e^{itX(x^0)/\sigma }}\right|^2 & \le \cE_{x^0_{B_1^c}} \cE_{x_{B_1}^{0,1}} ( e^{it \beta/\sigma })\\
				& = \cE_{x_{B_1}^{0,1}} \cE_{x^0_{B_1^c}} ( e^{it \beta/\sigma }).
			\end{split}
		\end{equation}
		
		By representing $\beta$ as a sum of monomials, one can see that the monomials corresponding to triangles without edges in $B_1$ are eliminated.
		Moreover, each triangle of the form $uvw$ with $v \in P_1$, $u \in P_2$ and $w \in P_3$ gives a contribution of $x_{uv}^{0}x_{vw}^{0}(x^0_{vw}-x^1_{vw})$ to $\beta$.
		By simplicity, let $C= \binom{P_1}{2} \cup B_1 \cup \binom{P_3}{2}$.
		Thus, we can write
		\begin{align}\label{eq:gamma}
			\beta = P(x_{C}^{0,1})+
			\sum \limits_{\substack{v \in P_1,\\ u \in P_2,\\ w \in P_3}}x_{uv}^{0}x_{uw}^{0}(x^0_{vw}-x^1_{vw}),
		\end{align}
		where $P$ is some polynomial.
		
		By applying the Cauchy--Schwartz inequality in~\eqref{eq:firstsquare}, we obtain
		\begin{align}\label{eq:almostthere}
			\left|\Ex{e^{itX(x^0)/\sigma }}\right|^4 & \le
			\cE_{x_{B_1}^{0,1}} \left|\cE_{x^0_{B_1^c}} ( e^{it \beta/\sigma })\right|^2\nonumber\\
			& \le
			\cE_{x_{B_1}^{0,1}} \cE_{x^0_{B_1^c \cap B_2^c}} \left|\cE_{x^0_{B_2}} ( e^{it \beta/\sigma })\right|^2.
		\end{align}
        Define $\Tilde{\beta}$ as the polynomial $\beta$ with the vector $x_{B_2}^0$ replaced by $x_{B_2}^1$.
        Thus, we have
        \begin{align}\label{eq:squaregamma}
			\left|\cE_{x^0_{B_2}} ( e^{it \beta/\sigma })\right|^2 =  \cE_{x^0_{B_2}} ( e^{it \beta/\sigma }) \cdot  \cE_{x^0_{B_2}} ( e^{-it \beta/\sigma }) = \cE_{x^{0,1}_{B_2}} ( e^{it (\beta-\Tilde{\beta})/\sigma }).
		\end{align}
        Similarly as before, the difference $\beta-\Tilde{\beta}$
		eliminates all monomials which do not depend on $B_2$.
		In particular, $P(x_C^{0,1})$ completely disappears.
		From~\eqref{eq:gamma} we obtain that
		\begin{align}\label{eq:gammadiff}
			\beta-\Tilde{\beta}&=
			\sum \limits_{\substack{v \in P_1,\\ u \in P_2,\\ w \in P_3}}x_{uv}^{0}(x_{uw}^{0}-x_{uw}^{1})(x^0_{vw}-x^1_{vw})= \alpha.
		\end{align}
		Note that $\alpha$ only uses randomness from $x_{B_1}^{0,1}$, $x_{B_2}^{0,1}$ and $x_A^0$.
        As $B = B_1 \cup B_2$ and $A \se B_1^c \cap B_2^c$,
		by combining~\eqref{eq:almostthere},~\eqref{eq:squaregamma} and~\eqref{eq:gammadiff} we obtain
		\begin{equation*}
			\begin{split}
				\left|\Ex{e^{itX(x^0)/\sigma }}\right|^4 \le 
				\cE_{x_{B_1}^{0,1}} \cE_{x^0_{B_1^c \cap B_2^c}} \cE_{x_{B_2}^{0,1}} (e^{it\alpha/\sigma})
				\le \cE_{x_{B}^{0,1}}|\cE_{x_A^0}(e^{it\alpha/\sigma})|. \nonumber
			\end{split}
		\end{equation*}
        This finishes our proof.
	\end{proof}
	\vspace*{-1cm}
 \setstretch{1.05}{
	\subsection{On the typical values of $\alpha$} \label{sec:alpha}

	%The strategy to bound $|\Ex{e^{itX/\sigma}}|$ is to use Lemma~\ref{lemma:CStrick-2}.
    %The idea is to first reveal the edges in $B_1$ and in $B_2$ in both graphs $G^0$ and $G^1$, and then show that with very high probability there are many edges in $f\in A$ with $|\alpha_f| = \Theta(\sqrt{p^2m})$, where $m=|P_3|$. 
    %This will show that $|\Ex{e^{itX/\sigma}}|$ is essentially bounded by the characteristic function of a sum of $\Theta(n^2)$ $p$-Bernoullis with weight $\Theta(\sigma^{-1}\sqrt{p^2m})$, which we are able to manage using Lemma \ref{lemma:charfuncbinomial}.
	
	Throughout this subsection, all probabilities will be with respect to $x_B^0$ and $x_B^1$, where we recall that $B = \{vw: v \in P_1 \cup P_2 \text{ and } w \in P_3\}$.
	Let us start by calculating $\Ex{\alpha_{uv}^2}$ for an edge $uv \in A$.
	To simplify notation, we denote $f=uv$ and for $j \in P_3$ we write
	\[X_j:= (x_{uj}^0-x_{uj}^1)(x_{vj}^0-x_{vj}^1) \qquad \text{and} \qquad \alpha_f = \sum \limits_{j\in P_3} X_j.\]	
	Note that $\Ex{X_j} = 0 $ and $X_j,X_{j'}$ are independent whenever $j\neq j'$. Moreover, $X_j^2 = |X_j|$, as $X_j$ can only take values in $\{-1,0,1\}$. Then, we have
	\[\Ex{\alpha_f^2} = \sum_{j\in P_3} \Ex{X_j^2}= \Pr{X_j \in \{1,-1\}}\cdot m= 4p^2(1-p)^2m.\]
	
	The value of $\alpha_f^2$ is closely related to the number of common neighbours of a pair of vertices in $G(n,p)$. 
	As we allow $p$ to be of order $n^{-1/2}$,
	it is not true that with high probability we have $\alpha_f^2 = \Theta(p^2m)$ for every $f \in A$.
	However we are able to show that as long as $m = \Omega(p^{-2})$ with very high probability we have $\alpha_f^2 = \Theta(p^2m)$ for a constant proportion of edges in $A$. 
	}
	\begin{lemma}
		\label{lemma:typical_alpha}
		Let $n, m \in \mathbb{N}$ and $p = p(n)\in (0,1)$ be such that
		$p^{-2}(1-p)^{-2}\le m \le n/2$.
		Let $P_1 \cup P_2 \cup P_3$ be an $m$-endowed partition of $[n]$ and $A'\subset A$ be the set of pairs $f$ such that \[\alpha_f \in \Big(2^{-1}\sqrt{\Ex{\alpha_f^2}}, 2^{3}\sqrt{\Ex{\alpha_f^2}}\Big).\] With probability at least $1-\exp (-\Omega(n))$ we have $|A'| \geq |A|/2^{7}$.
	\end{lemma}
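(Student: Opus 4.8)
The plan is to use a second-moment argument on each edge $f \in A$ separately, followed by a concentration argument over all edges in $A$. First I would analyse a single pair $f = uv \in A$. Recall $\alpha_f = \sum_{j \in P_3} X_j$ where the $X_j$ are i.i.d., each taking the value $1$ with probability $p^2(1-p)^2$, the value $-1$ with the same probability, and $0$ otherwise; in particular $\Ex{\alpha_f} = 0$ and $\Ex{\alpha_f^2} = 4p^2(1-p)^2 m$, which is at least $4$ by the hypothesis $m \ge p^{-2}(1-p)^{-2}$. The first step is to show that for \emph{each} fixed $f \in A$, the pair $f$ lies in $A'$ with probability bounded below by an absolute constant. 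For the lower tail $|\alpha_f| > \tfrac12 \sqrt{\Ex{\alpha_f^2}}$, I would apply the Paley--Zygmund inequality (Theorem \ref{thm:paley}) to the nonnegative random variable $S = \alpha_f^2$: one needs a bound $\Ex{S^2} = \Ex{\alpha_f^4} = O\big((\Ex{\alpha_f^2})^2\big)$, which follows from the standard fourth-moment expansion of a sum of independent mean-zero variables (the diagonal term is $\sum_j \Ex{X_j^4} = O(p^2 m)$ and the off-diagonal term is $3\big(\sum_j \Ex{X_j^2}\big)^2 = O(p^4 m^2)$, and $p^2 m \le p^4 m^2$ since $p^2 m \ge 1$). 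This gives $\Pr{\alpha_f^2 > \tfrac14 \Ex{\alpha_f^2}} \ge c_0$ for an absolute constant $c_0 > 0$. For the upper tail I would bound $\Pr{\alpha_f^2 \ge 2^6 \Ex{\alpha_f^2}} = \Pr{|\alpha_f| \ge 8\sqrt{\Ex{\alpha_f^2}}}$ by Chebyshev (or Chernoff, Lemma \ref{lemma:chernoff}, applied to the Binomial $\sum_j |X_j|$ governing the number of nonzero terms together with a symmetry/random-sign argument), obtaining a bound like $1/64$. Intersecting, $\Pr{f \in A'} \ge c_0 - 1/64 =: c_1 > 0$.

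Next I would pass from "constant probability per edge" to "constant proportion of edges, with exponentially small failure probability". The obstacle here is that the events $\{f \in A'\}$ for different $f \in A$ are \emph{not independent}: two edges $f = uv$ and $f' = uv'$ sharing the vertex $u$ both depend on the variables $x_{uj}^0, x_{uj}^1$ for $j \in P_3$. The clean way around this is to expose $|A'|$ as a sum over a family of \emph{independent} sub-collections. Fix a perfect matching (or near-perfect matching) $M$ between $P_1$ and $P_2$ — this is possible since $|P_1|, |P_2| = \lfloor (n-m)/2\rfloor, \lceil (n-m)/2\rceil$, so $|M| \ge (n-m)/2 - 1 \ge |A|^{1/2}/2$ roughly, but more to the point we can tile $A$ by $\Theta(n-m)$ such matchings. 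For the edges $f$ within a single matching $M$, the indicator events $\{f \in A'\}_{f \in M}$ are mutually independent, because distinct edges of a matching involve disjoint vertex pairs and hence disjoint sets of underlying $B$-variables. Applying Chernoff (Lemma \ref{lemma:chernoff}) to the Binomial-dominated sum $\sum_{f \in M} \mathds{1}_{f \in A'}$, which has mean $\ge c_1 |M|$, gives that with probability $1 - \exp(-\Omega(|M|)) = 1 - \exp(-\Omega(n-m)) = 1 - \exp(-\Omega(n))$ (using $m \le n/2$) at least $\tfrac{c_1}{2}|M|$ edges of $M$ lie in $A'$. Taking a union bound over the $\Theta(n-m)$ matchings tiling $A$ — the failure probability $\Theta(n)\exp(-\Omega(n))$ is still $\exp(-\Omega(n))$ — we get that simultaneously every matching contributes $\ge \tfrac{c_1}{2}|M|$ edges to $A'$, and summing over the tiling yields $|A'| \ge \tfrac{c_1}{2}|A|$. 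Choosing the constants so that $c_1/2 \ge 2^{-7}$ (shrinking the per-edge constant at the first step if necessary) finishes the proof.

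The main technical point to get right is the per-edge second-moment computation together with the choice of the absolute constants $2^{-1}$ and $2^3$ in the definition of $A'$ so that the Paley--Zygmund lower bound genuinely beats the upper-tail error; the combinatorial tiling of $A$ by matchings and the Chernoff step are routine once the independence structure is set up correctly. I would also double-check the edge case $p$ close to $1/2$ and the boundary $m = p^{-2}(1-p)^{-2}$, where $\Ex{\alpha_f^2}$ is only a constant, to make sure the fourth-moment bound and the discrete nature of $\alpha_f$ (which takes integer values) do not cause the window $(2^{-1}\sqrt{\Ex{\alpha_f^2}}, 2^3\sqrt{\Ex{\alpha_f^2}})$ to be too small to contain any integer — this is why the window is taken with multiplicative width $16$ rather than something tighter, and why the hypothesis $m \ge p^{-2}(1-p)^{-2}$ (forcing $\Ex{\alpha_f^2} \ge 4$) is exactly what is needed.
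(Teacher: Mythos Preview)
Your proposal is correct and follows essentially the same route as the paper: Paley--Zygmund applied to $\alpha_f^2$ (using the fourth-moment bound $\Ex{\alpha_f^4}=O(p^4m^2)$, valid precisely because $p^2(1-p)^2m\ge 1$) for the lower tail, Markov/Chebyshev for the upper tail, and then the decomposition of $A$ into $\Theta(n)$ disjoint perfect matchings so that within each matching the indicators $\mathds{1}_{\{f\in A'\}}$ are independent, followed by Chernoff and a union bound. One small slip: $\Pr{X_j=1}=\Pr{X_j=-1}=2p^2(1-p)^2$, not $p^2(1-p)^2$, though your value $\Ex{\alpha_f^2}=4p^2(1-p)^2m$ is correct and nothing downstream is affected.
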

	\vspace*{-0.3cm}
	\begin{proof}
		We start by providing a lower bound for the probability that $\alpha_f^2 \ge \Ex{\alpha_f^2}/2$.
		We have
		\[\Ex{\alpha_f^2} = \sum_{j\in P_3} \Ex{X_j^2}= \Pr{X_j \in \{1,-1\}}\cdot m= 4p^2(1-p)^2m.\]
		By the Payley--Zigmund inequality (c.f. Theorem~\ref{thm:paley}), we have
		\begin{equation}
			\label{eq:PZ-alpha}
			 \Pr{\alpha_f^2 \ge \dfrac{\Ex{\alpha_f^2}}{2}}
			\ge \dfrac{(\Ex{\alpha_f^2})^2}{4\Ex{\alpha_f^4}}= \frac{4p^4(1-p)^4m^2}{\Ex{\alpha_f^4}}.
		\end{equation}
		
		As the random variables $(X_j)_{j \in P_3}$ are independent and 
		$\Ex{X_j^2}=\Ex{X_j^4}=4p^2(1-p)^2$, the fourth moment of $\alpha_f$ is given by	
		\begin{align*}
			\Ex{\alpha_f^4} &= 6\sum \limits_{i,j \in \binom{P_3}{2}}\Ex{X_i^2 X_j^2}+ \sum \limits_{j\in P_3} \Ex{X_j^4}\\
			& = 6\binom{m}{2}(\Ex{X_1^2})^2+m\Ex{X_1^4}\\
			& \le 48 p^4(1-p)^4 m^2 + 4p^2(1-p)^2m \le 2^6 p^4(1-p)^4 m^2.
		\end{align*}
		In the last inequality, we used that $p^2(1-p)^2m\ge 1$.
		By plugging this into \eqref{eq:PZ-alpha}, we obtain
		\begin{align}\label{eq:payleyforalpha}
			\Pr{\alpha_f^2 \ge \dfrac{\Ex{\alpha_f^2}}{2}} \ge 2^{-4}.
		\end{align}
        Recall the definition of $A'$ provided in the statement.
		By Markov's inequality we have $\Pr{\alpha_f^2\ge 2^{5} \Ex{\alpha_f^2}} \le 2^{-5}$.
		We combine it with~\eqref{eq:payleyforalpha} to get that
		\begin{align}\label{eq:probfinA}
			\Pr{f\in A'}\ge \Pr{\dfrac{\Ex{\alpha_f^2}}{2} \le \alpha_f^2 \le 2^5\Ex{\alpha_f^2} } \ge 2^{-4}-2^{-5} = 2^{-5}.
		\end{align}
		
		Recall that $A = \{uv: u \in P_1, v \in P_2\}$, where $|P_1| = \lfloor (n-m)/2 \rfloor$ and $|P_2| = \lceil (n-m)/2 \rceil$.
		Let $M_1, M_2, \ldots, M_{|P_1|}$ be $|P_1|$ disjoint matchings of size $|P_1|$ in $A$.
		For each $i \in [|P_1|]$, define 
		$Z_i = \sum_{f \in M_i} \mathds{1}_{\{f\in A'\}}$.
		Observe that for each $i \in [|P_1|]$ the collection $(\mathds{1}_{\{f\in A'\}})_{f \in M_i}$ is composed by independent random variables.
		Then, by Chernoff's inequality (c.f. Lemma~\ref{lemma:chernoff}), we have
		\[\Pr{Z_i \le \Ex{Z_i}/2}\le \exp(-2^{-5}\Ex{Z_i})\le \exp(-2^{-13}n)\]
		for all $i \in [|P_1|]$ and $n$ sufficiently large.
		The last inequality follows from~\eqref{eq:probfinA} combined with $|P_1|\ge (n-m)/4\ge n/8$.
		As $\Ex{Z_i}\ge 2^{-5}|P_1|$, it follows that
		\begin{align*}
			|A'| \ge Z_1+\cdots+Z_{|P_1|}\ge 2^{-5}|P_1|^2 
		\end{align*}
		with probability at least $1-n\exp(-2^{-13}n)$.
		This proves our lemma.
	\end{proof}

	\subsection{Putting pieces together}
	
	We now finalise the argument by seeing $\alpha$ as a sum of independent Bernoullis with weight $\Theta(\sqrt{p^2m})$, after revealing the edges in $B$.
	\begin{proof}[Proof of Theorem~\ref{thm:main-3}]
		Let $p\in(4n^{-1/2},1/2)$ and $m$ be any integer such that $4/p^2 \le m \le n/2$.
		By Lemma~\ref{lemma:CStrick-2}, we have
		\begin{align}\label{eq:boundingcharmiddle}
			|\Ex{e^{itX/\sigma }}|^4\le  \cE_{x_B^{0,1}} |\cE_{x_{A}^0} ( e^{it\alpha/\sigma } )|.
		\end{align}
		Define
		\[A'=A'(x_B^{0,1}) := \left\{f\in A : \alpha_f \in \left(\frac{\sqrt{p^2m}}{2}, 2^{4}\sqrt{p^2m}\right)\right\}.\]
		As the set $A'$ only depends on the outcome $x_{B}^{0,1}$, we have
		\begin{align}\label{eq:expAprime1}
			|\cE_{x_A^0} ( e^{it\alpha/\sigma } )|  
            = |\cE_{x_{A\setminus A'}^{0}} \cE_{x_{A'}^0} ( e^{it\alpha/\sigma } )|
            \le
            \cE_{x_{A\setminus A'}^{0}}|\cE_{x_{A'}^0} ( e^{it\alpha/\sigma } )|
			 = \left |\cE_{x_{A'}^0} \Big ( e^{it/\sigma \cdot \sum \limits_{f \in A'} \alpha_f x_f^{0} } \Big ) \right |.
		\end{align}
		In the last equality, we used that $\alpha = \sum_{f \in A}\alpha_f x_f^0$ and that the random variables $(x_f^0)_{f \in A}$ are independent, with coefficients $(\alpha_f)_{f \in A}$ not depending on $x_A$.
		By Lemma~\ref{lemma:charfuncbinomial}, it follows that
		\begin{align*}
			\left |\cE_{x_{A'}^0} \Big ( e^{it/\sigma \cdot \sum \limits_{f \in A'} \alpha_f x_f^{0} } \Big ) \right | \le 
			\prod \limits_{f \in A'} \left(1-8p(1-p)\left \| \dfrac{t\alpha_f}{2\pi \sigma} \right \|^2 \right).
		\end{align*}
		
		As $\alpha_f \le 2^4\sqrt{p^2m}$ for every $f \in A'$, we have $\|t\alpha_f/(2\pi \sigma)\|=t\alpha_f/(2\pi \sigma)$ whenever $0 \le t \le \pi \sigma/(2^4\sqrt{p^2m})$.
		As $p \le 1/2$, it follows that
		\begin{align}\label{eq:expAprime2}
			\left |\cE_{x_{A'}^0} \Big ( e^{it/\sigma \cdot \sum \limits_{f \in A'} \alpha_f x_f^{0} } \Big ) \right | 
			&\le 
			\exp  \left( -\sum \limits_{f \in A'}  \dfrac{pt^2\alpha_f^2}{\pi^2 \sigma^2} \right) \nonumber\\
			& \le \exp  \left( - \dfrac{t^2p^3m|A'|}{4\pi^2 \sigma^2} \right)
		\end{align}
		for every $0 \le t \le \pi \sigma/(2^4\sqrt{p^2m})$.
		In the last inequality, we used that $\alpha_f^2 \ge p^2m/4$ for every $f \in A'$.
		By combining~\eqref{eq:expAprime1} and~\eqref{eq:expAprime2}, we obtain
		\begin{align}\label{eq:xazero}
			|\cE_{x_A^0} ( e^{it\alpha/\sigma } )| \le  \exp  \left( - \dfrac{t^2p^3m|A'|}{4\pi^2 \sigma^2} \right) 
			\le \exp  \left( - \dfrac{t^2m}{2^{19} p^2n^2} \right)+\mathds{1}_{\{|A'|<2^{-13}n^2\}}
		\end{align}
		for every $0 \le t \le \pi \sigma/(2^4\sqrt{p^2m})$. In the last inequality, we used that $\sigma^2 \le n^4p^5$ for $p \ge 4n^{-1/2}$.
		
		As $4n^{-1/2}\le p \le 1/2$, we have $\Ex{\alpha_f^2} \in (p^2m,4p^2m)$.
		Moreover, as $4/p^2 \le m \le n/2$, Lemma~\ref{lemma:typical_alpha} implies that the set $A'$ has size at least $|A'|\ge 2^{-7}|A|\ge 2^{-13}n^2$ with probability at least $1-\exp(-\Omega(n))$.
		By combining with~\eqref{eq:boundingcharmiddle} with~\eqref{eq:xazero}, it follows that
		\begin{align}\label{eq:4thpowercharfunc}
			|\Ex{e^{itX/\sigma }}|^4\le \exp  \left( - \dfrac{t^2m}{2^{19} p^2n^2} \right) + \exp(-\Omega(n))
		\end{align}
		for every $0 \le t \le \pi \sigma/(2^4\sqrt{p^2m})$.
		
		Let $\delta\in (0,2/5)$ be a constant. The next step is to ensure that the right-hand side of~\eqref{eq:4thpowercharfunc} decays `fast' if $ (p^2n)^{\frac{1}{2-\delta}}<t<\sigma/2^8$.
		With this in mind, for each integer $m \in (4/p^2,n/2)$ we define the interval
		\[I_m := \left(\left(\dfrac{2^{19}p^2n^2}{ m}\right)^{\frac{1}{2-\delta}}, \frac{\sigma}{2^8pm^{1/2}} \right)\]
		and set $I = \cup_{m} I_m$, where the union is over $m \in (4/p^2,n/2)\cap \mathbb{N}$.
		As $(\alpha +\beta)^{1/4}\le \alpha^{1/4}+\beta^{1/4}$ for every $\alpha, \beta \ge 0$, 
		it follows from~\eqref{eq:4thpowercharfunc} that for every $t \in I $ we have 
		\begin{align*}
			|\Ex{e^{itX/\sigma }}|\le \exp  \left( -t^{\delta} \right) + \exp(-\Omega(n)).
		\end{align*}
		
		Now, we claim that $\big((2^{21}p^2 n)^{\frac{1}{2-\delta}},\sigma/2^{10}\big) \se I$.
		First, note that the right-hand endpoint of $I_{\lceil 4/p^2 \rceil}$ is greater than $\sigma/2^{10}$ and the left-hand endpoint of $I_{\lfloor n/2 \rfloor}$ is smaller than $(2^{21}p^2 n)^{\frac{1}{2-\delta}}$.
		As the extreme values of $I_m$ are decreasing in $m$, it suffices to show that 
		$I_m\cap I_{m+1} \neq \emptyset$ for every $m \in (4/p^2,n/2-1)\cap \mathbb{N}$.
		This is equivalent to
		\begin{align}\label{eq:intervals}
			\left(\dfrac{2^{19}p^2n^2}{ m}\right)^{\frac{1}{2-\delta}} \le \frac{\sigma}{2^8p(m+1)^{1/2}}.
		\end{align}
		By simplicity, let $\gamma>0$ be such that $1/2 + \gamma = (2-\delta)^{-1}$. 
		As $m \ge 1$, it follows that
        $(m+1)^{1/2}/m^{1/2+\gamma} \le 2^{1/2}$.
        Moreover, as $\sigma \ge 2^{-4}n^4p^5$,~\eqref{eq:intervals} holds if $2^{22+19\gamma} \le (np)^{3-2\gamma}$,
        which is true for all $\gamma \in (0,1)$ and sufficiently large $n$.
        As $\delta \in(0,2/5)$ implies $\gamma \in (0,1/8)$ and $\delta \ge 2\gamma$, we conclude that 
		\begin{align*}
			|\Ex{e^{itX/\sigma }}|\le \exp  \left( -t^{2\gamma} \right)
		\end{align*}
		for every $t \in \big((2^{21}p^2 n)^{1/2+\gamma},\sigma/2^{10}\big)$.
	\end{proof}
 \vspace*{0.2cm}
	\section{The role of a single vertex in the regime $\sigma/2^{12} \leq t \leq \pi \sigma$}\label{section:end}
	\vspace*{0.2cm}
	Here we prove Theorem~\ref{thm:main-4}, which covers the last regime where $t$ has the same order of $\sigma$.
	Recall the definition of $\alpha_f$ and $\alpha$ given in~\eqref{eq:defalphaf} and~\eqref{eq:defalpha}, respectively.
	Here we still use the $\alpha$ function as a tool for bounding $|\Ex{e^{itX/\sigma}}|$, but we consider the case where $P_3$ consists of a single vertex. 
	As in Section~\ref{sec:alpha}, we start by proving an analogue of Lemma \ref{lemma:typical_alpha}.
	Recall that for an $m$-partition $P_1 \cup P_2 \cup P_3$ of $[n]$, we set $A = \{uv: u \in P_1, v \in P_2\}$.

	\begin{lemma}\label{lemma:Aprimeforonevertex}
		Let $n \in \mathbb{N}$, $P_1 \cup P_2 \cup P_3$ be an $1$-endowed partition of $[n]$ and $p=p(n) \in (0,1/2)$. 
		Let $A'\subset A$ be the set of pairs $f$ such that $|\alpha_f|=1$. We have $|A'|\geq(pn)^2/2^4$
		with probability at least $1-\exp(-pn/2^4)$.
	\end{lemma}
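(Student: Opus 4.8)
The plan is to mimic the structure of the proof of Lemma~\ref{lemma:typical_alpha}, but exploiting the fact that when $P_3=\{v\}$ is a single vertex the quantities $\alpha_f$ become much more tractable. Write $P_3=\{v\}$. Then for $f=uw\in A$ with $u\in P_1$, $w\in P_2$, we have $\alpha_{uw}=(x_{uv}^0-x_{uv}^1)(x_{wv}^0-x_{wv}^1)$, which is a single $\{-1,0,1\}$-valued term rather than a sum over $P_3$. Thus $|\alpha_{uw}|=1$ precisely when $x_{uv}^0\neq x_{uv}^1$ \emph{and} $x_{wv}^0\neq x_{wv}^1$; that is, when both $u$ and $w$ lie in the symmetric difference $N(v,G^0)\triangle N(v,G^1)$ of the neighbourhoods of $v$ in the two copies. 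So if we let $D:=\{y\in[n]\setminus\{v\}: x_{yv}^0\neq x_{yv}^1\}$, then $A'=\{uw: u\in P_1\cap D,\ w\in P_2\cap D\}$ and hence $|A'|=|P_1\cap D|\cdot|P_2\cap D|$.

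First I would control $|D\cap P_1|$ and $|D\cap P_2|$. For each vertex $y\neq v$, the event $y\in D$ has probability $2p(1-p)$, independently over $y$ (the relevant edges $yv$ are distinct for distinct $y$, and $x^0,x^1$ are independent). Since $p\in(0,1/2)$ we have $2p(1-p)\ge p$, so $|D\cap P_i|$ stochastically dominates a binomial random variable with mean at least $p|P_i|$. Because $P_1\cup P_2\cup\{v\}=[n]$ with $|P_1|,|P_2|\ge (n-1)/2\ge n/4$ (for $n$ large), we get $\Ex{|D\cap P_i|}\ge pn/4$ for $i\in\{1,2\}$. Next I would apply Chernoff's inequality (Lemma~\ref{lemma:chernoff}) to each of $|D\cap P_1|$ and $|D\cap P_2|$: with $t=\Ex{Y}/2$ this gives $\Pr{|D\cap P_i|\le \Ex{|D\cap P_i|}/2}\le 2\exp(-\Ex{|D\cap P_i|}/6)\le 2\exp(-pn/24)$. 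A union bound over $i\in\{1,2\}$ then yields that with probability at least $1-4\exp(-pn/24)$ we have $|D\cap P_i|\ge pn/8$ for both $i$, and therefore $|A'|=|D\cap P_1|\cdot|D\cap P_2|\ge (pn)^2/2^6$.

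The only remaining bookkeeping is to match the exact constants $(pn)^2/2^4$ and $\exp(-pn/2^4)$ claimed in the statement; this is a matter of being slightly more careful with the Chernoff exponent and with the bound $|P_i|\ge (n-1)/2$ (absorbing the $-1$ and the factor $2p(1-p)\ge p$ for $n$ large, or simply choosing constants generously), and I would not expect any genuine difficulty there. I do not anticipate a real obstacle in this lemma: the single-vertex case collapses $\alpha_f$ into a product of two independent indicator-difference factors, so $A'$ becomes an exact product set and no second-moment / Paley--Zygmund argument is needed — the whole estimate reduces to two applications of Chernoff to binomials with mean $\Theta(pn)$. If anything, the mild point to watch is that the events $\{f\in A'\}$ across different $f$ are \emph{not} independent (they are determined by the $pn$-sized random set $D$), which is exactly why one works with $|D\cap P_1|$ and $|D\cap P_2|$ separately rather than trying to concentrate $\sum_{f}\mathds{1}_{f\in A'}$ directly.
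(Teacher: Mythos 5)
Your proof is correct and takes essentially the paper's approach: identify $A' = (P_1 \cap D)\times(P_2\cap D)$ where $D = N_{G^0}(v)\triangle N_{G^1}(v)\sim\mathrm{Bin}(n-1,2p(1-p))$, and apply Chernoff. Your version is in fact slightly more careful on one point: the paper deduces $|A'|\ge\binom{pn/2}{2}$ directly from $|D|\ge pn/2$, but $|A'|=|P_1\cap D|\cdot|P_2\cap D|$ and $ab\le\binom{a+b}{2}$ always, so that step is not justified as written (indeed $D$ could in principle land mostly in one part); controlling $|D\cap P_1|$ and $|D\cap P_2|$ separately, as you do, is the clean fix and yields the lemma up to routine constant bookkeeping.
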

	
	\begin{proof}
		Let us set $P_3 = \{w\}$. For an edge $f=uv \in A$ we have
		\[\alpha_f = (x_{uw}^0-x_{uw}^1)(x_{vw}^0-x_{vw}^1).\]
		\noindent In words, $|\alpha_f|=1$ if and only if $u$ and $v$ are in the symmetric difference of $N_{G^0}(w)$ and $N_{G^1}(w)$, denoted by $N_{G^0}(w)\Delta N_{G^1}(w)$. Under the probabilities distributions of $G^0$ and $G^1$, we have that $|N_{G^0}(w)\Delta N_{G^1}(w)|$ has distribution $\text{Bin}(n-1, 2p(1-p))$. 
		Thus, by Chernoff's inequality (c.f. Lemma~\ref{lemma:chernoff}) we obtain
		\[\Pr{|N_{G^0}(w)\Delta N_{G^1}(w)| \leq p(1-p)n} \leq \exp(-p(1-p)n/2^3) \leq \exp(-pn/2^4).\]
		We conclude that the set $A'$ of pairs $f=uv$ such that $|\alpha_f| = 1$ has size at least
		\[|A'|\geq \binom{pn/2}{2} \geq \frac{(pn)^2}{2^4}\]
		with probability at least $1-\exp(-pn/2^4)$, as desired.
	\end{proof}
	
	Now we are ready to prove Theorem~\ref{thm:main-4}.
	
	\begin{proof}[Proof of Theorem \ref{thm:main-4}]
		The proof follows in the same fashion as in the proof of Theorem \ref{thm:main-3}. 
		Let $P_1 \cup P_2 \cup P_3$ be an $1$-partition of $[n]$.
		Let $A'\se A$ be the set of pairs $f$ such that $|\alpha_f|=1$.
		Recall that $A'$ only depends on $x_B^{0,1}$.
		By combining Lemma~\ref{lemma:charfuncbinomial}
		with  Lemma~\ref{lemma:CStrick-2}, we have
		\begin{align*}
			|\Ex{e^{itX/\sigma }}|^4 \le \cE_{x_B^{0,1}} |\cE_{x_{A}^0} ( e^{it\alpha/\sigma } )|
			&\le \cE_{x_B^{0,1}} |\cE_{x_{A'}^0} ( e^{it\alpha/\sigma } )|\\
			&\le 
			\cE_{x_B^{0,1}} \left (\prod \limits_{f \in A'} \left(1-8p(1-p)\left \| \dfrac{t\alpha_f}{2\pi \sigma} \right \|^2 \right) \right).
		\end{align*}
		As $t\leq \pi\sigma$ and $|\alpha_f|=1$ for $f\in A'$, we have $\| t\alpha_f/(2\pi \sigma)\| = t\alpha_f/(2\pi \sigma)$.
		As $p \le 1/2$, we obtain
		\begin{align}\label{eq:Aprimeforone}
				|\Ex{e^{itX/\sigma }}|^4 \le \cE_{x_B^{0,1}} \left ( \exp \left(-\dfrac{pt^2|A'|}{\pi^2\sigma^2}\right) \right ).
		\end{align}
	
	By Lemma~\ref{lemma:Aprimeforonevertex}, we have $|A'| \ge (pn)^2/2^4$ with probability at least $1-\exp(-pn/2^4)$.
	As $(\alpha +\beta)^{1/4}\le \alpha^{1/4}+\beta^{1/4}$ for every $\alpha, \beta \ge 0$ and $p \ge n^{-1/2}$,
	it follows from~\eqref{eq:Aprimeforone} that
	\begin{align*}
		|\Ex{e^{itX/\sigma }}| &\le \exp \left(-\dfrac{p^3n^2t^2}{2^{10}\sigma^2}\right)+\exp(-pn/2^6)\\
		& \le \exp\left(-\Omega(\sqrt{n}) \right),
	\end{align*}
	which finishes the proof.
	\end{proof}

	\section*{Acknowledgements}
	
	We would like to thank Rob Morris and Tibor Szab\'o for helpful suggestions which improved the presentation of this paper.
    We are also grateful to the referee for their careful reading and insightful comments, which have further refined the manuscript.

	\bibliographystyle{abbrv}
	
	\small{\bibliography{bib}}

\end{document}